 \newtheorem{Theorem}{Theorem}[section]
 \newtheorem{Corollary}[Theorem]{Corollary}
 \newtheorem{Lemma}[Theorem]{Lemma}
 \newtheorem{Proposition}[Theorem]{Proposition}
\newtheorem{Question}[Theorem]{Question}
 \newtheorem{Remark}[Theorem]{Remark}
 \numberwithin{equation}{section}
\begin{document}

\title[An Optimal Support Function]
 {An Optimal Support Function related to the strong openness conjecture}

\author{Qi'an Guan and Zheng Yuan}

\address{Qi'an Guan, School of Mathematical Sciences,
Peking University, Beijing, 100871, China.}

\address{Zheng Yuan, School of Mathematical Sciences,
Peking University, Beijing, 100871, China.}

\email{guanqian@math.pku.edu.cn \\  zyuan@pku.edu.cn}

\subjclass[2010]{32D15 32E10 32L10 32U05 32W05}

\thanks{}

\keywords{strong openness conjecture, weighted $L^2$ integrations, multiplier ideal sheaf, plurisubharmonic function, superlevel set}

\date{\today}

\dedicatory{}

\commby{}
%%% ----------------------------------------------------------------------

%%% ----------------------------------------------------------------------
\maketitle
%%% ----------------------------------------------------------------------
\begin{abstract}
	In the present article, we obtain an optimal support function
of weighted $L^2$
integrations on superlevel sets of psh weights, which implies the strong openness property of multiplier ideal sheaves.
\end{abstract}

\section{Introduction}

The notion of  multiplier ideal sheaves plays an important role in complex geometry and algebraic geometry,
which has been widely discussed (see e.g. \cite{tian87,Nadel90,siu96,DEL00,D-K01,demailly-note2000,D-P03,Laz04,siu05,siu09,demailly2010}).

Let $D$ be a domain in $\mathbb C^n$, and let $\psi$ be a negative plurisubharmonic function (see \cite{demailly-book,Si,siu74}) on $D$.
Recall that the multiplier ideal sheaf $\mathcal I(\psi)$ was defined as the sheaf of germs of holomorphic functions $f$ such that
$|f|^{2}e^{-\psi}$ is locally integrable (see \cite{demailly2010}). It is known that $\mathcal I(\psi)$ is a coherent analytic sheaf (see \cite{Nadel90} and \cite{demailly2010}).
In \cite{demailly2010} (see also \cite{demailly-note2000}), Demailly posed the strong openness conjecture for multiplier ideal sheaves:
$\mathcal I(\psi)=\mathcal I_+(\psi):=\cup_{\epsilon>0}\mathcal I((1+\epsilon)\psi)$.
When $\mathcal I(\psi)=\mathcal{O}$, the strong openness conjecture was called the openness conjecture.
In \cite{berndtsson13}, Berndtsson proved the openness conjecture (Favre-Jonsson proved 2-dim in \cite{FM05j}).
In \cite{GZopen-c} (see also \cite{Lempert14} and \cite{Hiep14}), Guan-Zhou proved the strong openness conjecture,
which is called the strong openness property (Jonsson-Mustata proved 2-dim in \cite{JM12}).
After that, Guan-Zhou \cite{GZ-lelong1}
gave a characterization of the multiplier ideal sheaves with weights of Lelong number
one by  using the strong openness property of multiplier ideal sheaves. Recently, Xu \cite{xu} completed the algebraic approach to the openness conjecture,
which was conjectured by Jonsson-Mustata \cite{JM12}.

Based on some modifications of the method in \cite{GZopen-effect} (see also \cite{guan-zhou13p} and \cite{guan-zhou13ap}),  a support function of weighted $L^2$ integrations on the superlevel sets of the weight was obtained in \cite{GLN-support function}, which implies the strong openness property.
Then it is natural to ask:

\begin{Question}
\label{Q:main_optimal}
Can one obtain an optimal version of the above support function?
\end{Question}

Let $z_0$ be a point in a pseudoconvex domain $D\subset \mathbb C^n$, and let $F$ be a holomorphic function on $D$. Let $D_t=\{z\in D:\psi\geq -t\}$ the superlevel set of weight $\psi$, and let   $C_{F,\psi,t}(z_0)$ be the infimum of $\int_{D_t}|F_1|^2$ for all $F_1\in \mathcal O(D)$ satisfying that $(F_1-F,z_0)\in \mathcal I(\psi)_{z_0}$. When $C_{F,\psi,t}(z_0)=0$ or $\infty$, we set $\frac{\int_{D_t}|F|^2e^{-\psi}	}{C_{F,\psi,t}(z_0)}=+\infty$.

In the present article, by doing some modifications of the method in \cite{guan_sharp}, we
obtain the following optimal uniform support function of $\frac{\int_{D_t}|F|^2e^{-\psi}	}{C_{F,\psi,t}(z_0)}$,
which gives a positive answer to Question \ref{Q:main_optimal}.

\begin{Theorem}
\label{thm: support function}
Let $D$ be a pseudoconvex domain in $\mathbb{C}^n$.
Let $F$ be a holomorphic function, and let $\psi$ be a negative plurisubharmonic function on $D$.
Assume that $\int_{\{\psi<-l\}}|F|^2<+\infty$ holds for any $l>0$.	
Then the inequality
\begin{equation}\label{eq: main}
\frac{\int_{D_t}|F|^2e^{-\psi}	}{C_{F,\psi,t}(z_0)}\geqslant\frac{t}{1-e^{-t}}
\end{equation}
holds for any $t\in(0,+\infty)$,
where $\frac{t}{1-e^{-t}}$ is the optimal support function.
\end{Theorem}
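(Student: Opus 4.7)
The plan is, for each $t > 0$ and each $\varepsilon > 0$, to construct a holomorphic $F_1 \in \mathcal{O}(D)$ with $(F_1 - F, z_0) \in \mathcal{I}(\psi)_{z_0}$ and
\begin{equation*}
\int_{D_t} |F_1|^2 \leq \left(\frac{1-e^{-t}}{t}+\varepsilon\right) \int_{D_t} |F|^2 e^{-\psi};
\end{equation*}
taking the infimum in the definition of $C_{F,\psi,t}(z_0)$ and then letting $\varepsilon \to 0$ yields \eqref{eq: main}. Extracting the \emph{sharp} constant $\frac{1-e^{-t}}{t}$, as opposed to an unspecified large constant, forces the use of an optimal $L^2$ extension in the spirit of B\l{}ocki, Guan--Zhou, and Berndtsson--Lempert, rather than a classical H\"ormander-type estimate.

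First I would perform the usual regularization reduction: approximate $\psi$ on a relatively compact subdomain by a decreasing sequence of smooth plurisubharmonic functions via Demailly's theorem, and exhaust $D$ by smoothly bounded pseudoconvex subdomains. The hypothesis $\int_{\{\psi<-l\}}|F|^2<+\infty$ for every $l>0$ ensures that these approximations behave well for the weighted integrals, and Montel's theorem combined with the strong openness property will allow passage to the limit at the end while preserving the membership $(F_1-F,z_0)\in\mathcal{I}(\psi)_{z_0}$.

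The heart of the proof is the construction of $F_1$ via an optimal $\bar\partial$-estimate. I would set $F_1 = \theta F - v$, where $\theta$ is a smooth cut-off identically $1$ near $z_0$ and compactly supported in a coordinate ball, and $v$ solves $\bar\partial v = F\,\bar\partial\theta$ on $D$. The weighted $L^2$ estimate is applied against a twisted weight of the form $-\tilde\psi + \alpha(-\psi)$, where $\tilde\psi$ is a sufficiently negative local singularity at $z_0$ (so that the $L^2$ bound on $v$ forces the required vanishing and hence membership in $\mathcal{I}(\psi)_{z_0}$), and $\alpha:[0,+\infty)\to\mathbb{R}$ is a convex non-decreasing profile chosen so that $e^{-\alpha(-\psi)}$ approximates $\mathbf{1}_{D_t}$. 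Matching the twist factor to the Berndtsson/Guan--Zhou optimization ODE makes the sharp constant $\frac{1-e^{-t}}{t}$ drop out of the elementary identity $\int_0^t e^{-s}\,ds = 1-e^{-t}$.

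The principal obstacle is precisely this calibration: a generic choice of $\alpha$ yields some worse constant, and only the extremal profile saturates the inequality, so the crux of the argument is the identification of the sharp weight and the precise execution of the twisted Bochner--Kodaira identity it feeds into. Once that is in place, optimality of $\frac{t}{1-e^{-t}}$ itself is confirmed on the one-dimensional model $D=\{|z|<1\}$, $\psi(z)=2\log|z|$, $F\equiv 1$, $z_0=0$: a direct polar-coordinate computation gives $\int_{D_t}|F|^2e^{-\psi}=\pi t$, while expanding any competitor in Taylor series about $0$ shows $C_{F,\psi,t}(0)=\pi(1-e^{-t})$, attained by the constant $F_1\equiv 1$, so the ratio equals $\frac{t}{1-e^{-t}}$ exactly.
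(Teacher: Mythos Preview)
Your outline does not match the paper's proof, and as written it has a structural gap.

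The difficulty is on the data side of your proposed $\bar\partial$-step. With $F_1=\theta F-v$ and $\theta$ a fixed cut-off supported in a coordinate ball around $z_0$, the right-hand side of any weighted H\"ormander/twisted estimate is an integral of $|F|^2|\bar\partial\theta|^2$ (times curvature and weight factors) over the Euclidean annulus $\mathrm{supp}(\bar\partial\theta)$ near $z_0$. That region has no intrinsic relation to the superlevel set $D_t=\{\psi\ge -t\}$, so there is no mechanism by which a single such estimate can output the quantity $\int_{D_t}|F|^2e^{-\psi}$ with the sharp coefficient $\tfrac{1-e^{-t}}{t}$. The Guan--Zhou ODE you allude to is tied not to a fixed $\theta$ but to the $\psi$-dependent truncation $1-b_{t_0,B}(\psi)$, whose derivative lives on the $\psi$-annulus $\{-t_0-B<\psi<-t_0\}$; even then one application produces data on a thin $\psi$-slab, not on all of $D_t$. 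Getting $\int_{D_t}|F|^2e^{-\psi}$ requires integrating over the full range $-t\le\psi<0$, which is an additional step your outline does not supply. (A smaller point: invoking the strong openness property in the regularization limit is circular here, since the paper derives strong openness from this theorem; the paper instead uses the closedness of coherent submodules, Lemma~\ref{closedness}.)

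What the paper actually does is quite different from a one-shot extension. It introduces, for each gain function $c\in P$, the minimal weighted norm $G_{F,\psi,c}(t)=\inf\{\int_{\{\psi<-t\}}|F_1|^2c(-\psi)\}$ over competitors $F_1$, and uses Lemma~\ref{l: estimates} (with $\varphi=\psi$ and the $\psi$-dependent cut-off) to prove the differential-type inequality of Lemma~\ref{l:5}. A comparison argument then yields Proposition~\ref{p}: $\int_{\{\psi<-l\}}|F|^2c(-\psi)\ge \frac{\int_l^{\infty}c(s)e^{-s}ds}{\int_0^{\infty}c(s)e^{-s}ds}\,G_{F,\psi,c}(0)$. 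Only after this is established does one let $c$ run through a sequence $c_t^n\searrow\mathbb{I}_{(0,t]}$, so that $G_{F,\psi,c_t^n}(0)\ge C_{F,\psi,t}(z_0)$ and the inequality becomes $\int_{\{-t\le\psi<-l\}}|F|^2\ge\frac{e^{-l}-e^{-t}}{1-e^{-t}}C_{F,\psi,t}(z_0)$; the theorem then follows from the layer-cake identity of Lemma~\ref{l:6}. In short, the sharp constant is obtained not from a single calibrated extension but from a monotonicity/concavity property of an entire one-parameter family, followed by a limiting choice of gain function and a Fubini-type integration.
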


The following remark illustrates the optimality of the support function in inequality \eqref{eq: main}.

\begin{Remark}
\label{r:optimality}
Take $D=\Delta\subset\mathbb{C}$, $z_{0}=\mathnormal{0}$ the origin of $\mathbb{C}$, $F\equiv1$ and $\psi=2\log{|z|}$.
It is clear that $\int_D|F|^2<+\infty$. Note that $C_{F,\psi,t}(z_0)=\pi(1-e^{-t})$ and $\int_{D_t}|F|^2e^{-\psi}=t\pi$.
Then $\frac{\int_{D_t}|F|^2e^{-\psi}	}{C_{F,\psi,t}(z_0)}=\frac{t}{1-e^{-t}}$, which shows the optimality of the support function $\frac{t}{1-e^{-t}}$.
\end{Remark}

Theorem \ref{thm: support function} implies the following strong openness property. We present some details in Section \ref{p of soc}.

\begin{Corollary}(see \cite{GZopen-c})
	\label{soc}
     $\mathcal{I}(\psi)=\mathcal{I}_+(\psi)$.
\end{Corollary}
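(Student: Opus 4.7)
The plan is to prove the nontrivial inclusion $\mathcal{I}(\psi)\subseteq\mathcal{I}_+(\psi)$; the reverse $\mathcal{I}_+(\psi)\subseteq\mathcal{I}(\psi)$ is immediate from $(1+\epsilon)\psi\leq\psi\leq 0$, which yields $e^{-\psi}\leq e^{-(1+\epsilon)\psi}$. Fix $(F,z_0)\in\mathcal{I}(\psi)_{z_0}$ and a small pseudoconvex neighborhood $D$ with $\int_D|F|^2e^{-\psi}<+\infty$ (so $\int_D|F|^2<+\infty$ as well, since $\psi\leq 0$). A Noetherian observation will be the backbone: because $\mathcal{O}_{z_0}$ is Noetherian, the ideal $\mathcal{I}_+(\psi)_{z_0}=\bigcup_{\epsilon>0}\mathcal{I}((1+\epsilon)\psi)_{z_0}$ is finitely generated, and choosing finitely many generators, each lying in some $\mathcal{I}((1+\epsilon_i)\psi)_{z_0}$, and setting $\epsilon^*:=\min_i\epsilon_i>0$, forces $\mathcal{I}((1+\epsilon)\psi)_{z_0}=\mathcal{I}_+(\psi)_{z_0}$ for every $\epsilon\in(0,\epsilon^*]$. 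It therefore suffices to produce $(F,z_0)\in\mathcal{I}((1+\epsilon^*)\psi)_{z_0}$.

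The key step is to apply Theorem \ref{thm: support function} to $F$ with the negative plurisubharmonic weight $(1+\epsilon)\psi$, taking the parameter as $T=s(1+\epsilon)$ so that the superlevel set $\{(1+\epsilon)\psi\geq-T\}$ is exactly $D_s$. Setting
\[
c_0(s):=\inf\Bigl\{\int_{D_s}|F_1|^2 \;:\; F_1\in\mathcal{O}(D),\ F_1-F\in\mathcal{I}_+(\psi)_{z_0}\Bigr\},
\]
the collapse of ideals from the previous paragraph identifies $C_{F,(1+\epsilon)\psi,s(1+\epsilon)}(z_0)$ with $c_0(s)$ for every $\epsilon\in(0,\epsilon^*]$, so the theorem reads
\[
c_0(s)\;\leq\;\frac{1-e^{-s(1+\epsilon)}}{s(1+\epsilon)}\int_{D_s}|F|^2 e^{-(1+\epsilon)\psi}.
\]
Letting $\epsilon\to 0^+$, dominated convergence on the right (with dominator $e^{2s}|F|^2$ on $D_s$, valid because $e^{-(1+\epsilon)\psi}\leq e^{s(1+\epsilon)}\leq e^{2s}$ there for $\epsilon\leq 1$) produces the limit inequality
\[
c_0(s)\;\leq\;\frac{1-e^{-s}}{s}\int_D|F|^2 e^{-\psi}.
\]
Since $c_0(s)$ is monotone non-decreasing in $s$ (the admissible class is $s$-independent while $D_s$ grows) and the right-hand side tends to $0$ as $s\to\infty$, one concludes $c_0\equiv 0$.

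To finish, pick almost-optimizers $F_1^{(n)}\in\mathcal{O}(D)$ with $F_1^{(n)}-F\in\mathcal{I}((1+\epsilon^*)\psi)_{z_0}$ and $\int_{D_n}|F_1^{(n)}|^2<1/n$. Standard holomorphic estimates---the mean-value inequality from Cauchy's integral formula applied on compacts of $\bigcup_n D_n=D\setminus\{\psi=-\infty\}$, together with removable singularity across the pluripolar set $\{\psi=-\infty\}$---show that $F_1^{(n)}\to 0$ locally uniformly on a neighborhood of $z_0$. The coherent ideal $\mathcal{I}((1+\epsilon^*)\psi)$ is closed in the compact-open topology of local sections, so passing to the limit in $F_1^{(n)}-F\to-F$ yields $-F\in\mathcal{I}((1+\epsilon^*)\psi)_{z_0}$, whence $(F,z_0)\in\mathcal{I}_+(\psi)_{z_0}$, as required.

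The main obstacle is this last closedness step: one must transfer the $L^2$-smallness of the optimizers into actual ideal membership for $F$, which hinges on the fact that locally uniform limits of sections of a coherent analytic ideal stay in the ideal---a standard but nontrivial consequence of Cartan's theorems. The Noetherian reduction collapsing $\mathcal{I}_+(\psi)_{z_0}$ to the single coherent ideal $\mathcal{I}((1+\epsilon^*)\psi)_{z_0}$ is what makes this step available; the full $\mathcal{I}_+(\psi)$ is only an increasing union of coherent ideals and would not manifestly enjoy such closedness. The remaining ingredients---the collapse of $C$-quantities, the dominated convergence in $\epsilon$, and the monotone analysis of $c_0(s)$---are routine once the optimal support function is in hand.
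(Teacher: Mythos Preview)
Your overall strategy---apply Theorem~\ref{thm: support function} to $(1+\epsilon)\psi$, let $\epsilon\to 0$, and exploit monotonicity in the level parameter to force the infimum to vanish---is essentially the contrapositive of the paper's argument, and the Noetherian collapse $\mathcal I_+(\psi)_{z_0}=\mathcal I((1+\epsilon^*)\psi)_{z_0}$ is a clean way to set things up. However, the final step has a genuine gap.

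From $c_0\equiv 0$ you pick $F_1^{(n)}\in\mathcal O(D)$ with $\int_{D_n}|F_1^{(n)}|^2<1/n$ and claim $F_1^{(n)}\to 0$ locally uniformly near $z_0$ by ``mean-value on compacts of $\bigcup_n D_n=D\setminus\{\psi=-\infty\}$ plus removable singularities across the pluripolar set''. This does not work for general plurisubharmonic $\psi$. The sets $D_n=\{\psi\ge -n\}$ are closed, not open, and a compact $K\subset D\setminus\{\psi=-\infty\}$ need not lie in any single $D_N$; worse, $\{\psi=-\infty\}$ can be \emph{dense} in $D$ (take $\psi=\sum_k a_k\log|z-z_k|$ with $\{z_k\}$ dense and $a_k\downarrow 0$ fast), so every $D_n$ may have empty interior near $z_0$. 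In that situation the mean-value inequality gives you nothing, and there is no removable-singularity theorem that converts $L^2$-control on a measure-zero-complement set into uniform control without some a priori local bound---which you do not have.

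The paper handles exactly this obstruction before invoking the support function: it first replaces $\psi$ by $\tilde\psi:=\max\{\psi,\tfrac{1}{1+\epsilon}\log|f_1|\}$ for a nonzero $f_1\in\mathcal I(3(1+\epsilon)\psi)_{z_0}$, checks that $\mathcal I((1+s)\psi)_{z_0}=\mathcal I((1+s)\tilde\psi)_{z_0}$ for all small $s\ge 0$, and thereby arranges $\{\tilde\psi=-\infty\}\subset\{f_1=0\}$, an analytic hypersurface. One then uses Weierstrass preparation to find a polyannulus $W\subset\{|f_1|>a\}\subset D_{t_0}$ surrounding $z_0$; the $L^2$-smallness of $F_j$ on $D_{t_0}$ restricts to $W$, and the maximum principle across $W$ (not a pluripolar removable-singularity argument) pushes the smallness to a full polydisc containing $z_0$. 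Only then does Lemma~\ref{closedness} give $(F,z_0)\in\mathcal I((1+s)\tilde\psi)_{z_0}$. Your argument can be repaired by inserting this analytic-pole reduction; without it, the last paragraph fails.
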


\section{Preparations}

In this section, we will do some preparations.

\subsection{$\overline{\partial}$-equation with $L^2$ estimates}
We call a positive smooth function $c$ on $(0,+\infty)$ in class $P$, if the following three statements hold

(1) $\int_0^{+\infty}c(t)e^{-t}dt<+\infty$;

(2) $c(t)e^{-t}$ is decreasing with respect to $t$;

(3) for any $a>0$, $c(t)$ has a positive lower bound on $(a,+\infty)$.

\

We shall prove Lemma \ref{l:5} by using the following Lemma, whose various forms already appear in \cite{guan-zhou13p,guan-zhou13ap,guan_sharp} etc.:

\begin{Lemma}
\label{l: estimates}
Let $B\in(0,+\infty)$ and $t_0>0$ be arbitrarily given. Let $D$ be a pseudoconvex domain in $\mathbb{C}^n$. Let $\psi$ be a negative plurisubharmonic function on $D$, and $\varphi$ be a plurisubharmonic function on $D$. Let $F$ be a holomorphic function on $\{\psi<-t_0\}$ such that
\begin{displaymath}
\int_{K\cap\{\psi<-t_0\}}|F|^2<+\infty
\end{displaymath}
for any compact subset $K$ of $D$ and
\begin{equation}
\label{eq:l1}
\int_D\frac{1}{B}\mathbb{I}_{\{-t_0-B<\psi<-t_0\}}|F|^2e^{-\varphi}\leq C <+\infty,	
\end{equation}
where $C\in\mathbb{R}$ is a positive constant. Then there exists a holomorphic function $\widetilde{F}$ on $D$  such that
\begin{equation}
\label{eq: estimates}	
\int_D|\widetilde{F}-(1-b_{t_0,B}(\psi))F|^2e^{-\varphi+v_{t_0,B}(\psi)}c(-v_{t_0,B}(\psi))\leq C\int_0^{t_0+B}c(t)e^{-t}dt,
\end{equation}
where $b_{t_0,B}(t)=\int_{-\infty}^t\frac{1}{B}\mathbb{I}_{\{-t_0-B<s<-t_0\}}ds$, $v_{t_0,B}(t)=\int_0^tb_{t_0,B}(s)ds$, and \mbox{$c(t)\in P$}.
\end{Lemma}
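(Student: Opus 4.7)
The plan is to realize $\widetilde{F}$ as a correction of the cutoff $G:=(1-b_{t_0,B}(\psi))F$, regarded as a function on all of $D$ by extending by zero outside $\{\psi<-t_0\}$. This is legitimate: since $1-b_{t_0,B}(\psi)$ vanishes identically on $\{\psi\geq -t_0\}$, with a smooth transition across the collar $\{-t_0-B<\psi<-t_0\}$, and $F$ is defined on $\{\psi<-t_0\}$, the product $G$ is a well-defined measurable function on $D$ whose derivative is $\bar\partial G=-b_{t_0,B}'(\psi)F\,\bar\partial\psi=-\frac{1}{B}\mathbb{I}_{\{-t_0-B<\psi<-t_0\}}F\,\bar\partial\psi$, supported in the collar. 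I will then solve $\bar\partial u=\bar\partial G$ on $D$ with an appropriate weighted $L^2$-estimate and set $\widetilde{F}:=G-u$, which is holomorphic on $D$ and automatically satisfies $\widetilde{F}-(1-b_{t_0,B}(\psi))F=-u$.

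The required $L^2$-solvability is a twisted H\"ormander/Ohsawa--Takegoshi-type estimate of the form appearing in the references \cite{guan-zhou13p, guan-zhou13ap, guan_sharp}. Its main ingredients are: (i) the plurisubharmonicity of $v_{t_0,B}(\psi)$, which follows from $v_{t_0,B}$ being convex and non-decreasing on $(-\infty,0]$ together with the plurisubharmonicity of $\psi$; (ii) an auxiliary twist function built from $c$, whose construction uses the class $P$ hypothesis that $c(t)e^{-t}$ is decreasing, and which supplies the curvature positivity needed to absorb the non-plurisubharmonic contribution coming from the $-v_{t_0,B}(\psi)-\log c(-v_{t_0,B}(\psi))$ piece of the weight; and (iii) the pseudoconvexity of $D$. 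The twist is calibrated so that the resulting solution $u$ satisfies an estimate whose left-hand side is exactly $\int_D|u|^2 e^{-\varphi+v_{t_0,B}(\psi)}c(-v_{t_0,B}(\psi))$.

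Because $\bar\partial G$ is supported in the collar $\{-t_0-B<\psi<-t_0\}$, the right-hand side of the $L^2$-estimate reduces to an integral of $|F|^2$ over this sliver against a weight involving $c$, $\varphi$ and $|b_{t_0,B}'(\psi)|^2=1/B^2$. Combined with hypothesis \eqref{eq:l1} and the change of variable $t=-v_{t_0,B}(\psi)$, this collapses to the stated bound $C\int_0^{t_0+B}c(t)e^{-t}\,dt$. The main technical obstacle is precisely the calibration of this auxiliary twist function: one must verify that the differential relation governing the twist, arising from the Bochner--Kodaira--Nakano curvature identity, can be solved globally under only the class $P$ conditions on $c$, and that the resulting weight on the left matches the exact form stated in \eqref{eq: estimates} rather than merely being comparable to it.
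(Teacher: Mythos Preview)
Your outline is the same strategy the paper follows: cut off with $1-b_{t_0,B}(\psi)$, solve a twisted $\bar\partial$-equation for the correction, and calibrate the twist via an ODE built from $c$. The paper's proof makes this rigorous through several layers of regularization---exhausting $D$ by strongly pseudoconvex domains, smoothing $\psi$ and $\varphi$, and replacing $b_{t_0,B},v_{t_0,B}$ by $C^\infty$ approximants $v_\epsilon',v_\epsilon$ (your phrase ``smooth transition'' is not literally true for $b_{t_0,B}$ itself)---and then explicitly solves the ODE system $(s+\frac{s'^2}{u''s-s''})e^{u-t}=\tfrac{1}{c(t)}$, $s'-su'=1$, obtaining $u(t)=-\log\int_0^t c(t_1)e^{-t_1}dt_1$ and the matching $s$; these are exactly the details you flag as the ``main technical obstacle'' but do not carry out.
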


For the sake of the convenience of the reader, we give the details of the proof of Lemma \ref{l: estimates} in Section \ref{sec:Lemma_L2}.

\begin{Remark}
\label{r}

Note that $v_{t_0,B}(t)\geq\max\{t,-(t_{0}+B)\}$, $c(t)e^{-t}$ is decreasing with respect to $t$,
and $b_{t_0,B}(t)=0$ for any $t\leq -(t_{0}+B)$.
Replacing $\varphi$ by $\psi$ and assuming $\psi(z_0)=-\infty$, it follows from inequality \eqref{eq: estimates} that
	\begin{displaymath}
		\begin{split}
			&\int_{\{\psi<-t_0-B\}}|\widetilde{F}-F|^2e^{-\psi-(t_0+B)}c(t_0+B)	
			\\\leq	&\int_{\{\psi<-t_0-B\}}|\widetilde{F}-(1-b_{t_0,B}(\psi))F|^2e^{-\varphi+v_{t_0,B}(\psi)}c(-v_{t_0,B}(\psi))
			\\\leq &\int_{D}|\widetilde{F}-(1-b_{t_0,B}(\psi))F|^2e^{-\varphi+v_{t_0,B}(\psi)}c(-v_{t_0,B}(\psi))
			\\\leq &C\int_0^{t_0+B}c(t)e^{-t}dt
			\\ <&+\infty,	
			\end{split}
	\end{displaymath}
	which implies that $(\widetilde{F}-F,z_0)\in\mathcal I(\psi)_{z_0}$.
\end{Remark}

\subsection{Some properties of $G_{F,\psi,c}(t)$}

Let $\psi$ be a negative plurisubharmonic function defined on a pseudoconvex domain $D\subset \mathbb C^n$,
and let $F$ be a holomorphic function on $\{z\in D:\psi<-t\}$, and $z_0\in\{z\in D:\psi<-t\}$.
Denote that
\begin{displaymath}
	G_{F,\psi,c}(t):=\inf\left\{\int_{\{\psi<-t\}}|F_1|^2c(-\psi):(F_1-F,z_0)\in\mathcal I(\psi)_{z_0} \& F_1\in\mathcal O(\{\psi<-t\})\right\},
\end{displaymath} where $c\in P$.

We recall the closedness of submodules, which can be referred to  \cite{G-R} (Chapter 2, \S3, 3. Closedness of Submodules, page 45).

\begin{Lemma}(see \cite{G-R}) 
\label{closedness}
Let $N$ a submodule of $\mathcal O_{\mathbb C^n,o}^q$, $1\leq q<+ \infty$, let $f_j\in\mathcal O_{\mathbb C^n,o}(U)^q$ be a sequence of $q-$tuples holomorphic in an open neighborhood $U$ of the origin. Assume that the $f_j$ converge uniformly in $U$ towards to a $q-$tuples $f\in\mathcal{O}_{\mathbb C^n,o}^q$, assume furthermore that all germs $(f_{j},o)$ belong to $N$. Then $(f,o)\in N$.
	
\end{Lemma}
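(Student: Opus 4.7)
The plan is to reduce the assertion to the Krull intersection theorem in the Noetherian local ring $\mathcal O:=\mathcal O_{\mathbb C^n,o}$. By R\"uckert's theorem the ring $\mathcal O$ is Noetherian, so when $q<\infty$ the free module $\mathcal O^q$ is finitely generated and Noetherian, and the submodule $N\subset\mathcal O^q$ is automatically finitely generated. The case $q=\infty$ I would handle by a truncation argument, restricting attention to the finitely many coordinates in which a fixed finite generating set of $N$ (which exists on every finite truncation) and the tuples $f_j,f$ can be analyzed simultaneously.

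The bridge between the analytic and the algebraic sides is that uniform convergence $f_j\to f$ on the neighborhood $U$ of the origin implies coefficient-wise convergence of Taylor series at $o$, since by Cauchy's integral formula each Taylor coefficient of each coordinate is a continuous linear functional of $f|_U$. Consequently, for every integer $k\geq 1$ the images of $f_{j,o}$ in the finite-dimensional quotient $\mathcal O^q/\mathfrak m^k\mathcal O^q$ converge to the image of $f_o$, where $\mathfrak m$ denotes the maximal ideal of $\mathcal O$. A fortiori, the images of $f_{j,o}$ in the further finite-dimensional quotient $V_k:=\mathcal O^q/(N+\mathfrak m^k\mathcal O^q)$ converge to the image of $f_o$. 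The hypothesis $f_{j,o}\in N$ forces the first sequence to be zero in $V_k$; passing to the limit in the finite-dimensional $\mathbb C$-vector space $V_k$, the image of $f_o$ is also zero, i.e.\ $f_o\in N+\mathfrak m^k\mathcal O^q$ for every $k\geq 1$.

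To conclude I would invoke the Krull intersection theorem (a standard corollary of the Artin--Rees lemma): for the Noetherian local ring $\mathcal O$, its finitely generated module $\mathcal O^q$, and the submodule $N$, one has $\bigcap_{k\geq 1}(N+\mathfrak m^k\mathcal O^q)=N$. Combined with the preceding paragraph this immediately yields $f_o\in N$. The only step requiring genuine care is the bookkeeping for the infinite-$q$ case; otherwise the argument amounts to gluing two standard inputs from commutative algebra (Noetherianity of $\mathcal O$ and Krull intersection) by means of Cauchy's integral formula, and I do not expect any hidden obstacle.
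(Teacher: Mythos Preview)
The paper does not supply its own proof of this lemma: it is quoted from Grauert--Remmert, \emph{Coherent Analytic Sheaves}, and used as a black box. Your outline is precisely the classical argument found there (and in most treatments of the closedness of coherent submodules): Cauchy estimates convert uniform convergence into convergence of truncated Taylor polynomials, hence $f_o\in N+\mathfrak m^k\mathcal O^q$ for every $k$, and the Krull intersection theorem (via Artin--Rees, using that $\mathcal O$ is Noetherian local) closes the argument. So for finite $q$ your proposal is correct and matches the standard proof.

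One point to tighten: the case $q=\infty$ as written is not quite settled. The free module of infinite rank over a Noetherian ring is not Noetherian, so $N$ need not be finitely generated and the Krull--Artin--Rees step does not apply directly. A clean fix is to observe that the limit $f$ has only finitely many nonzero coordinates (if $\mathcal O^\infty$ means the direct sum), pick a finite subset $S$ of coordinates containing those of $f$, and replace $N$ by its image under the projection $\pi_S:\mathcal O^\infty\to\mathcal O^{|S|}$; then $\pi_S(f_{j,o})\in\pi_S(N)$, the finite-$q$ argument gives $\pi_S(f_o)\in\pi_S(N)$, and since the remaining coordinates of $f_o$ vanish one recovers $f_o\in N$. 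In the paper only $q=1$ is ever used, so this subtlety is irrelevant for the applications, but your sketch of the infinite case should be sharpened along these lines.
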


The closedness of submodules will be used in the following discussion.

\begin{Lemma}
\label{l:2}
Let $F$ be a holomorphic function on $D$, then $(F,z_0)\in\mathcal{I}(\psi)_{z_0}$ if and only if  $G_{F,\psi,c}(0)=0$, where $(F,z_0)$ denotes the germ of $F$ at $z_0$.
\end{Lemma}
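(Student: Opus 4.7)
The plan is to prove both implications directly, with the forward direction following from the definitions and the backward direction using the closedness of coherent submodules (Lemma \ref{closedness}) together with a standard $L^2$-to-uniform upgrade for holomorphic functions.

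For the forward direction, if $(F,z_0)\in\mathcal{I}(\psi)_{z_0}$, I would simply take the trial function $F_1\equiv 0$. Then $(F_1-F,z_0)=(-F,z_0)\in\mathcal{I}(\psi)_{z_0}$, and $\int_{D}|F_1|^2c(-\psi)=0$, so the infimum $G_{F,\psi,c}(0)$ is zero.

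For the backward direction, assume $G_{F,\psi,c}(0)=0$ and choose a minimizing sequence $\{F_j\}\subset\mathcal{O}(D)$ with $(F_j-F,z_0)\in\mathcal{I}(\psi)_{z_0}$ and $\int_D|F_j|^2c(-\psi)\to 0$. The key step is to show $F_j\to 0$ locally uniformly on $D$. Fix a compact set $K\subset D$. Since $\psi$ is upper semicontinuous and strictly negative, $-\psi$ attains a positive minimum $a_K>0$ on $K$. Property (3) of the class $P$ then provides a constant $c_K>0$ with $c(s)\geq c_K$ on $(a_K,+\infty)$, so $c(-\psi)\geq c_K$ on $K$ and
\begin{displaymath}
\int_K|F_j|^2\leq c_K^{-1}\int_D|F_j|^2c(-\psi)\to 0.
\end{displaymath}
The sub-mean-value inequality $|F_j(z)|^2\leq C(z)\int_{B(z,r)}|F_j|^2$, valid on any ball $B(z,r)\Subset D$, upgrades this $L^2_{\mathrm{loc}}$ convergence to locally uniform convergence, so $F_j-F\to -F$ uniformly on some open neighborhood of $z_0$.

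With this in hand, I would apply Lemma \ref{closedness} to the submodule $N=\mathcal{I}(\psi)_{z_0}\subset\mathcal{O}_{\mathbb{C}^n,z_0}$ (the case $q=1$): the germs $(F_j-F)_{z_0}$ lie in $N$ and $F_j-F$ converges uniformly on an open neighborhood of $z_0$ to $-F$, so $(-F,z_0)\in\mathcal{I}(\psi)_{z_0}$, i.e.\ $(F,z_0)\in\mathcal{I}(\psi)_{z_0}$. The only real obstacle is the passage from weighted $L^2$ smallness to uniform smallness near $z_0$; once the positive lower bound on $c(-\psi)$ over compact sets is extracted from the defining properties of the class $P$, the rest is a direct unwinding of definitions.
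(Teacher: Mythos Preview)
Your proof is correct and follows essentially the same approach as the paper: the forward direction uses $F_1\equiv 0$ as a trial function, and the backward direction extracts a positive lower bound for $c(-\psi)$ on compacta, passes from weighted $L^2$ convergence to locally uniform convergence of $F_j\to 0$, and then invokes Lemma~\ref{closedness}. The only differences are cosmetic: you make explicit how property~(3) of the class $P$ yields the lower bound and you use the sub-mean-value inequality directly (so no subsequence is needed), whereas the paper simply asserts the lower bound and passes to a compactly convergent subsequence.
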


\begin{proof}
It is clear that $(F,z_0)\in\mathcal{I}(\psi)_{z_0}$ implies that $G_{F,\psi,c}(0)=0$.

If $ G_{F,\psi,c}(0)=0$, then there exist holomorphic functions $\{f_j\}_{j\in \mathbb{N}}$ on $D$ such that $\lim_{j\rightarrow +\infty}\int_D|f_j|^2c(-\psi)=0$ and $(f_j-F,z_0)\in \mathcal{I}(\psi)_{z_0}$ for any $j$. As $c(-\psi)$ has positive lower bound on any compact subset of $D$, then there exists a subsequence of  $\{f_j\}_{j\in \mathbb{N}}$  denoted by $\{f_{j_k}\}_{k\in \mathbb{N}}$
 compactly convergent to 0.
Then we know $f_{j_k}-F$ is compactly convergent to $-F$.
It follows from Lemma \ref{closedness} that $(F,z_0)\in\mathcal{I}(\psi)_{z_0}$. This proves Lemma \ref{l:2}. 
 \end{proof}

 \begin{Lemma}
 	\label{l:3}
Assume that $G_{F,\psi,c}(t)<+\infty$, then there exists a unique holomorphic function $F_t$ on $\{\psi<-t\}$ satisfying $(F_t-F,z_0)\in\mathcal{I}(\psi)_{z_0}$ and $\int_{\{\psi<-t\}}|F_t|^2c(-\psi)=G_{F,\psi,c}(t)$.
Furthermore, for any holomorphic function $\hat
 	{F}$ on $\{\psi<-t\}$ satisfying $(\hat{F}-F,z_0)\in\mathcal{I}(\psi)_{z_0}$ and $\int_{\{\psi<-t\}}|\hat{F}|^2c(-\psi)<+\infty$, we have the following equality
    \begin{equation}
    	\label{eq:l31}
    	\int_{\{\psi<-t\}}|F_t|^2c(-\psi)+\int_{\{\psi<-t\}}|\hat{F}-F_t|^2c(-\psi)=\int_{\{\psi<-t\}}|\hat{F}|^2c(-\psi).
    \end{equation}
 	
 \end{Lemma}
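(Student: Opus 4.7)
The plan is to recognize Lemma \ref{l:3} as a standard Hilbert space projection. Introduce the weighted space $H=L^2(\{\psi<-t\},c(-\psi))$ and the admissible set
$$\mathcal{A}=\{F_1\in\mathcal{O}(\{\psi<-t\})\cap H:(F_1-F,z_0)\in\mathcal{I}(\psi)_{z_0}\}.$$
The hypothesis $G_{F,\psi,c}(t)<+\infty$ guarantees $\mathcal{A}$ is nonempty, and since $\mathcal{I}(\psi)_{z_0}$ is closed under $\mathbb{C}$-linear combinations, $\mathcal{A}$ is an affine subspace of $H$: $(1-s)F_1+sF_2\in\mathcal{A}$ for all $F_1,F_2\in\mathcal{A}$ and all $s\in\mathbb{C}$. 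The problem then reduces to producing the element of $\mathcal{A}$ nearest to $0$ in $H$.

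For existence, I would take a minimizing sequence $\{F_j\}\subset\mathcal{A}$ with $\|F_j\|_H^2\to G_{F,\psi,c}(t)$. Affineness gives $(F_j+F_k)/2\in\mathcal{A}$ with $\|(F_j+F_k)/2\|_H^2\geq G_{F,\psi,c}(t)$, so the parallelogram identity yields
$$\|F_j-F_k\|_H^2=2\|F_j\|_H^2+2\|F_k\|_H^2-\|F_j+F_k\|_H^2\leq 2\|F_j\|_H^2+2\|F_k\|_H^2-4G_{F,\psi,c}(t),$$
which tends to $0$. Thus $\{F_j\}$ is Cauchy and converges in $H$ to some $F_t$ with $\|F_t\|_H^2=G_{F,\psi,c}(t)$. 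To upgrade this to a holomorphic limit in $\mathcal{A}$, I would use property (3) of class $P$: on any compact $K\subset\{\psi<-t\}$, upper semicontinuity of $\psi$ gives $\sup_K\psi<-t$, so $c(-\psi)$ is bounded below by a positive constant on $K$. Hence $H$-convergence implies $L^2_{\mathrm{loc}}$-convergence, which for holomorphic functions upgrades to locally uniform convergence; thus $F_t\in\mathcal{O}(\{\psi<-t\})$, and Lemma \ref{closedness} applied to the locally uniform convergence of $F_j-F$ near $z_0$ yields $(F_t-F,z_0)\in\mathcal{I}(\psi)_{z_0}$, so $F_t\in\mathcal{A}$.

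The orthogonality identity \eqref{eq:l31} then follows from a variational computation. For any $\hat F\in\mathcal{A}$, affineness implies $F_t+s(\hat F-F_t)\in\mathcal{A}$ for every $s\in\mathbb{R}$, so the quadratic $s\mapsto\|F_t+s(\hat F-F_t)\|_H^2$ is minimized at $s=0$ and its linear term vanishes; running the same argument with $s\mapsto is$ yields
$$\int_{\{\psi<-t\}}\overline{F_t}(\hat F-F_t)\,c(-\psi)=0.$$
Expanding $\|\hat F\|_H^2=\|F_t+(\hat F-F_t)\|_H^2$ then gives exactly \eqref{eq:l31}. Uniqueness is an immediate consequence: any other minimizer $\hat F$ would, by \eqref{eq:l31}, force $\|\hat F-F_t\|_H^2=0$, hence $\hat F=F_t$.

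The only mildly delicate step, and the one I anticipate as the main obstacle, is the passage from norm convergence in $H$ to holomorphic convergence on $\{\psi<-t\}$; this is exactly where property (3) of class $P$ is needed, together with the upper semicontinuity of $\psi$, to secure a uniform positive lower bound of $c(-\psi)$ on compact subsets. Everything else is classical orthogonal projection onto a closed affine subspace of a Hilbert space.
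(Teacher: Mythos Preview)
Your proof is correct and follows essentially the same Hilbert-space projection strategy as the paper: the variational derivation of the orthogonality identity \eqref{eq:l31} and the use of Lemma~\ref{closedness} to keep the limit in $\mathcal{I}(\psi)_{z_0}$ are identical in substance. The one technical difference is in the existence step: the paper extracts a compactly convergent subsequence via Montel (using the positive lower bound of $c(-\psi)$ on compacta) and then applies Fatou/Levi to recover the norm bound, whereas you use the parallelogram law to show the minimizing sequence is Cauchy in $H$ and then descend to local uniform convergence. Both routes hinge on the same property~(3) of class $P$, and each yields the minimizer in $\mathcal{A}$; your argument has the minor advantage of giving uniqueness for free from \eqref{eq:l31}, while the paper proves uniqueness separately (again by parallelogram) before establishing orthogonality.
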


 \begin{proof}
 Firstly, we prove the existence of  $F_t$. As $G_{F,\psi,c}(t)<+\infty$, then there exist holomorphic functions $\{f_j\}_{j\in \mathbb{N}}$ on $\{\psi<-t\}$ such that
 $\lim_{j\rightarrow+\infty}\int_{\{\psi<-t\}}|f_j|^2c(-\psi)=G_{F,\psi,c}(t)$ and $(f_j-F,z_0)\in\mathcal{I}(\psi)_{z_0}$. Then there exists a subsequence of  $\{f_j\}_{j\in \mathbb{N}}$  denoted by $\{f_{j_k}\}_{k\in \mathbb{N}}$ compactly convergence to a holomorphic function $f$ on $\{\psi<-t\}$ satisfying $\int_K|f|^2c(-\psi)\leq G_{F,\psi,c}(t)$ for any compact subset $K\subset\{\psi<-t\}$,
 which implies that $\int_{\{\psi<-t\}}|f|^2c(-\psi)\leq G_{F,\psi,c}(t)$ by the monotone convergence theorem.
 Note that Lemma \ref{closedness} implies that $(f-F,z_0)\in\mathcal{I}(\psi)_{z_0}$. Then we obtain the existence of $F_t(=f)$.

 Secondly, we prove the uniqueness of $F_t$ by contradiction: if not, there exist two different holomorphic functions $f_1$ and $f_2$ on $\{\psi<-t\}$ satisfying $\int_{\{\psi<-t\}}|f_1|^2c(-\psi)=\int_{\{\psi<-t\}}|f_2|^2c(-\psi)=G_{F,\psi,c}(t)$, $(f_1-F,z_0)\in\mathcal{I}(\psi)_{z_0}$, and $(f_2-F,z_0)\in\mathcal{I}(\psi)_{z_0}$. Note that
\begin{equation}
\label{eq:l32}
\begin{split}	\int_{\{\psi<-t\}}\left\vert\frac{f_1+f_2}{2}\right\vert^2c(-\psi)+\int_{\{\psi<-t\}}\left\vert\frac{f_1-f_2}{2}\right\vert^2c(-\psi)
	\\=\frac{\int_{\{\psi<-t\}}|f_1|^2c(-\psi)+\int_{\{\psi<-t\}}|f_2|^2c(-\psi)	}{2}=G_{F,\psi,c}(t),
\end{split}
\end{equation}
then we obtain that
\begin{displaymath}
	\int_{\{\psi<-t\}}\left\vert\frac{f_1+f_2}{2}\right\vert^2c(-\psi)< G_{F,\psi,c}(t) ,
\end{displaymath}
and $(\frac{f_1+f_2}{2}-F,z_0)\in\mathcal{I}(\psi)_{z_0}$, which contradicts the definition of $G_{F,\psi,c}(t)$.

Finally, we prove the equality \eqref{eq:l31}. For any holomorphic function $f$ on $\{\psi<-t\}$ satisfying $(f,z_0)\in\mathcal{I}(\psi)_{z_0}$ and $\int_{\{\psi<-t\}}|f|^2c(-\psi)<+\infty$, 
 it is clear that for any complex number $\alpha$, $F_t+\alpha f$ satisfies 

\begin{displaymath}	
((F_t+\alpha f)-F,z_0)=((F_t-F)+\alpha f,z_0)\in\mathcal{I}(\psi)_{z_0}
\end{displaymath}
and the definition of $G_{F,\psi,t}(t)$ implies that
\begin{displaymath} \int_{\{\psi<-t\}}|F_t|^2c(-\psi)\leq\int_{\{\psi<-t\}}|F_t+\alpha f|^2c(-\psi)<+\infty.
\end{displaymath}
Note that
 \begin{displaymath} \int_{\{\psi<-t\}}|F_t+\alpha f|^2c(-\psi)-\int_{\{\psi<-t\}}|F_t|^2c(-\psi)\geq0	
 \end{displaymath}
implies
\begin{displaymath}
	\mathrm{Re}\int_{\{\psi<-t\}}F_t\overline{f}c(-\psi)=0,
\end{displaymath}
then we obtain that
\begin{displaymath}
\int_{\{\psi<-t\}}|F_t+f|^2c(-\psi)=\int_{\{\psi<-t\}}|F_t|^2c(-\psi)+\int_{\{\psi<-t\}}|f|^2c(-\psi).
\end{displaymath}
Choosing $f=\hat{F}-F_t$, we obtain equality \eqref{eq:l31}.\end{proof}

 \begin{Lemma}
 	\label{l:4}
 	Let $F$ be a holomorphic function on $D$ and $\psi(z_0)=-\infty$. Assume that $G_{F,\psi,c}(0)<+\infty$. Then $G_{F,\psi,c}(t)$ is decreasing with respect to $t\in[0,+\infty)$	 such that $\lim_{t\rightarrow t_0+}G_{F,\psi,c}(t)= G_{F,\psi,c}(t_0)$ for any $t_0\in[0,+\infty)$, $\lim_{t\rightarrow t_0-}G_{F,\psi,c}(t)\geq G_{F,\psi,c}(t_0)$ for any $t_0\in(0,+\infty)$ and $\lim_{t\rightarrow +\infty}G_{F,\psi,c}(t)=0$. 	Especially, $G_{F,\psi,c}(t)$ is lower semi-continuous on $[0,+\infty)$.
 	\end{Lemma}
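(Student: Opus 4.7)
The plan is to establish the four properties in turn, the first two being essentially immediate from monotonicity and the last two requiring a normal-families compactness argument.

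\emph{Monotonicity.} For $t_1<t_2$, any competitor $F_1\in\mathcal{O}(\{\psi<-t_1\})$ with $(F_1-F,z_0)\in\mathcal{I}(\psi)_{z_0}$ restricts to $\{\psi<-t_2\}\subset\{\psi<-t_1\}$; the germ at $z_0$ is unchanged because $\psi(z_0)=-\infty$ places $z_0$ in both open sets. Since $|F_1|^2c(-\psi)\geq 0$, shrinking the domain can only decrease the integral, whence $G_{F,\psi,c}(t_2)\leq G_{F,\psi,c}(t_1)$. The left inequality $\lim_{t\to t_0-}G_{F,\psi,c}(t)\geq G_{F,\psi,c}(t_0)$ is then immediate from monotonicity, since that limit equals $\sup_{t<t_0}G_{F,\psi,c}(t)\geq G_{F,\psi,c}(t_0)$.

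\emph{Limit at infinity.} Since $G_{F,\psi,c}(0)<+\infty$, Lemma \ref{l:3} produces $F_0\in\mathcal{O}(D)$ with $(F_0-F,z_0)\in\mathcal{I}(\psi)_{z_0}$ and $\int_D|F_0|^2c(-\psi)=G_{F,\psi,c}(0)<+\infty$. Restricting $F_0$ to $\{\psi<-t\}$ gives the bound $G_{F,\psi,c}(t)\leq\int_{\{\psi<-t\}}|F_0|^2c(-\psi)$. As $t\to+\infty$, the sets $\{\psi<-t\}$ decrease to $\{\psi=-\infty\}$, which is pluripolar and hence Lebesgue-null, so dominated convergence yields $\lim_{t\to+\infty}G_{F,\psi,c}(t)=0$.

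\emph{Right continuity.} Fix $t_0\in[0,+\infty)$ and pick any sequence $t_j\downarrow t_0$ with $t_j>t_0$. By monotonicity $G_{F,\psi,c}(t_j)\leq G_{F,\psi,c}(t_0)<+\infty$, so Lemma \ref{l:3} supplies minimizers $F_{t_j}\in\mathcal{O}(\{\psi<-t_j\})$ with $(F_{t_j}-F,z_0)\in\mathcal{I}(\psi)_{z_0}$ and $\int_{\{\psi<-t_j\}}|F_{t_j}|^2c(-\psi)=G_{F,\psi,c}(t_j)\leq G_{F,\psi,c}(t_0)$. Any compact $K\subset\{\psi<-t_0\}$ lies in $\{\psi<-t_j\}$ for all sufficiently large $j$, and there $c(-\psi)$ has a positive lower bound (by property (3) of class $P$), so the $F_{t_j}$ are uniformly $L^2$-bounded on $K$, hence locally uniformly bounded by the mean value inequality. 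Montel's theorem gives, after passing to a subsequence, a holomorphic limit $F^*\in\mathcal{O}(\{\psi<-t_0\})$ with $F_{t_j}\to F^*$ uniformly on compacts. Fatou's lemma then yields
\[
\int_{\{\psi<-t_0\}}|F^*|^2c(-\psi)\leq\liminf_{j\to\infty}\int_{\{\psi<-t_j\}}|F_{t_j}|^2c(-\psi)=\lim_{t\to t_0+}G_{F,\psi,c}(t).
\]
Because $\psi(z_0)=-\infty$, $z_0$ lies in every $\{\psi<-t_j\}$, so near $z_0$ the functions $F_{t_j}-F$ converge uniformly to $F^*-F$; the closedness statement of Lemma \ref{closedness} applied in the stalk $\mathcal{O}_{\mathbb{C}^n,z_0}/\mathcal{I}(\psi)_{z_0}$ gives $(F^*-F,z_0)\in\mathcal{I}(\psi)_{z_0}$. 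Thus $F^*$ is admissible for $G_{F,\psi,c}(t_0)$, which forces $G_{F,\psi,c}(t_0)\leq\lim_{t\to t_0+}G_{F,\psi,c}(t)$; monotonicity provides the reverse inequality. Lower semi-continuity on $[0,+\infty)$ follows from the combination of right continuity and the left inequality.

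The main obstacle is the right-continuity step: one must simultaneously arrange normal-family compactness for the minimizers $F_{t_j}$, a Fatou lower bound on their weighted $L^2$ norms, and preservation of the germwise multiplier-ideal condition under the compact-open limit. The first two are routine once property (3) of class $P$ is invoked, and the last is precisely the content of Lemma \ref{closedness}.
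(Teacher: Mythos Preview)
Your proof is correct and follows essentially the same approach as the paper's: the paper phrases the right-continuity step as a contradiction argument whereas you argue directly, but the core ingredients (Lemma~\ref{l:3} minimizers, normal-family compactness via property (3) of class $P$, Lemma~\ref{closedness} for preservation of the germ condition, and Fatou/Levi for the integral bound) are identical. One small slip: in the left-limit step, for a decreasing function one has $\lim_{t\to t_0-}G_{F,\psi,c}(t)=\inf_{t<t_0}G_{F,\psi,c}(t)$ rather than $\sup$, though your conclusion $\geq G_{F,\psi,c}(t_0)$ is unaffected.
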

 	
\begin{proof}
By the definition of $G_{F,\psi,c}(t)$, it is clear that $G_{F,\psi,c}(t)$ is decreasing on $[0,+\infty)$, $\lim_{t\rightarrow t_0-}G_{F,\psi,c}(t)\geq G_{F,\psi,c}(t_0)$ for any $t\in(0,+\infty)$ and $\lim_{t\rightarrow +\infty}G_{F,\psi,c}(t)=0$. It suffices to prove $\lim_{t\rightarrow t_0+}G_{F,\psi,c}(t)= G_{F,\psi,c}(t_0)$. We prove it by contradiction: if not, then $\lim_{t\rightarrow t_0+}G_{F,\psi,c}(t)< G_{F,\psi,c}(t_0)$ for some $t_0\in[0,+\infty)$.

By Lemma \ref{l:3},  there exists a unique holomorphic function $F_t$ on $\{\psi<-t\}$ satisfying that $(F_t-F,z_0)\in\mathcal{I}(\psi)_{z_0}$ and $\int_{\{\psi<-t\}}|F_t|^2c(-\psi)=G_{F,\psi,c}(t)$. Note that $G_{F,\psi,c}(t)$ is decreasing implies that $\int_{\{\psi<-t\}}|F_t|^2c(-\psi)\leq\lim_{t\rightarrow t_0+}G_{F,\psi,c}(t)$ for any $t>t_0$. As $c(-\psi)$ has positive lower bound on any compact subset of $\{\psi<-t_0\}$,  then there exists a sequence of $\{F_{t_j}\}$ ($t_j\rightarrow t_0$, as $j\rightarrow+\infty$) uniformly convergent on any compact subset of $\{\psi<-t_0\}$.
Let $\hat{F}_{t_0}:=\lim_{j\rightarrow+\infty}F_{t_j}$, which is a holomorphic function on $\{\psi<-t_0\}$ and $(\hat{F}_{t_0}-F,z_0)\in\mathcal{I}(\psi)_{z_0}$. Then it follows the decreasing property of $G_{F,\psi,c}(t)$ that 
\begin{equation}
	\nonumber
	\int_{K}|\hat{F}_{t_0}|^2c(-\psi)=\lim_{j\rightarrow+\infty}\int_{K}|F_{t_j}|^2c(-\psi)\leq\lim_{j\rightarrow+\infty}G_{F,\psi,c}(t_j)<G_{F,\psi,c}(t_0)-\epsilon
\end{equation} for any compact subset of $\{\psi<-t_0\}$ and $\epsilon>0$ is independent of $K$. It follows from the monotone convergence theorem that 
$	\int_{\{\psi<-t_0\}}|\hat{F}_{t_0}|^2c(-\psi)<G_{F,\psi,c}(t_0)$,
 which contradicts the definition of $G_{F,\psi,c}(t_0)$.
  \end{proof}

The following Lemma will be used to prove Proposition \ref{p}.

\begin{Lemma}
	\label{l:5}
	Let $F$ be a holomorphic function on $D$ and $\psi(z_0)=-\infty$. Assume that $G_{F,\psi,c}(0)<+\infty$. Then for any $t_0\in(0,+\infty)$, we have
	\begin{equation}
		\label{eq:l51}
		G_{F,\psi,c}(0)-G_{F,\psi,c}(t_0)\leq\frac{\int_0^{t_0}c(t)e^{-t}dt}{c(t_0)e^{-t_0}}\liminf_{B\rightarrow0+}\frac{G_{F,\psi,c}(t_0)-G_{F,\psi,c}(t_0+B)}{B}.		
	\end{equation}
	\end{Lemma}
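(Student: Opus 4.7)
The plan is to apply Lemma \ref{l: estimates} with input $F = F_{t_0}$ (the minimizer on $\{\psi<-t_0\}$ provided by Lemma \ref{l:3}) and $\varphi=\psi$ along a sequence $B_j\searrow 0$ realizing the right-hand $\liminf$ in \eqref{eq:l51}, to extract a locally uniform limit $\widetilde F_\infty$ of the resulting holomorphic extensions, and to read off \eqref{eq:l51} from the orthogonal decomposition in Lemma \ref{l:3}.

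Set $\alpha := \liminf_{B\to 0+}\frac{G_{F,\psi,c}(t_0)-G_{F,\psi,c}(t_0+B)}{B}$; we may assume $\alpha<+\infty$. Choose $B_j\searrow 0$ with $\frac{G_{F,\psi,c}(t_0)-G_{F,\psi,c}(t_0+B_j)}{B_j}\to\alpha$. Since $\psi(z_0)=-\infty$, the restriction $F_{t_0}|_{\{\psi<-(t_0+B_j)\}}$ is an admissible competitor for $G_{F,\psi,c}(t_0+B_j)$, so Lemma \ref{l:3} gives
\[
\int_{\{-t_0-B_j<\psi<-t_0\}}|F_{t_0}|^2c(-\psi)\leq G_{F,\psi,c}(t_0)-G_{F,\psi,c}(t_0+B_j).
\]
Since $c(t)e^{-t}$ is decreasing, $e^{-\psi}/c(-\psi)\leq e^{t_0+B_j}/c(t_0+B_j)$ on this strip, so the constant $C_j$ in hypothesis \eqref{eq:l1} for $F=F_{t_0}$, $\varphi=\psi$ satisfies $C_j\leq\tfrac{e^{t_0+B_j}}{c(t_0+B_j)}\cdot\tfrac{G_{F,\psi,c}(t_0)-G_{F,\psi,c}(t_0+B_j)}{B_j}$, whence $\limsup_j C_j\int_0^{t_0+B_j}c(t)e^{-t}dt\leq\frac{\int_0^{t_0}c(t)e^{-t}dt}{c(t_0)e^{-t_0}}\alpha$. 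Lemma \ref{l: estimates} then produces $\widetilde F_j\in\mathcal O(D)$, and Remark \ref{r} combined with $(F_{t_0}-F,z_0)\in\mathcal I(\psi)_{z_0}$ gives $(\widetilde F_j-F,z_0)\in\mathcal I(\psi)_{z_0}$.

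The sharpness-critical step is the pointwise weight comparison $c(-\psi)\leq e^{-\psi+v_{t_0,B_j}(\psi)}c(-v_{t_0,B_j}(\psi))$ on $\{\psi<0\}$, which follows at once from $v_{t_0,B_j}(\psi)\geq\psi$ and the monotonicity of $c(t)e^{-t}$. Because $(1-b_{t_0,B_j}(\psi))F_{t_0}$ vanishes on $\{\psi\geq-t_0\}$ and equals $F_{t_0}$ on $\{\psi\leq-t_0-B_j\}$, discarding the nonnegative strip contribution in Lemma \ref{l: estimates} yields
\[
\int_{\{\psi\geq-t_0\}}|\widetilde F_j|^2c(-\psi)+\int_{\{\psi\leq-t_0-B_j\}}|\widetilde F_j-F_{t_0}|^2c(-\psi)\leq C_j\int_0^{t_0+B_j}c(t)e^{-t}dt.
\]
This bound together with the triangle inequality shows $\int_D|\widetilde F_j|^2c(-\psi)$ is uniformly bounded, so by Montel a subsequence converges locally uniformly to some $\widetilde F_\infty\in\mathcal O(D)$ with $(\widetilde F_\infty-F,z_0)\in\mathcal I(\psi)_{z_0}$ by Lemma \ref{closedness}. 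Passing to $\liminf$ by Fatou (the second integral uses that $\{\psi\leq-t_0-B_j\}$ exhausts $\{\psi<-t_0\}$) gives
\[
\int_{\{\psi\geq-t_0\}}|\widetilde F_\infty|^2c(-\psi)+\int_{\{\psi<-t_0\}}|\widetilde F_\infty-F_{t_0}|^2c(-\psi)\leq\frac{\int_0^{t_0}c(t)e^{-t}dt}{c(t_0)e^{-t_0}}\alpha.
\]
Finally, Lemma \ref{l:3}'s orthogonality on $\{\psi<-t_0\}$ (applied to $\widetilde F_\infty|_{\{\psi<-t_0\}}$) combined with $G_{F,\psi,c}(0)\leq\int_D|\widetilde F_\infty|^2c(-\psi)$ rearranges to \eqref{eq:l51}.

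The principal obstacle is the weight bookkeeping required to preserve the sharp constant: the inequality $c(-\psi)\leq e^{-\psi+v_{t_0,B}(\psi)}c(-v_{t_0,B}(\psi))$ (which is precisely where the class-$P$ hypothesis that $c(t)e^{-t}$ is decreasing enters) is exactly what permits dropping the positive strip contribution in Lemma \ref{l: estimates} without losing the factor $\frac{\int_0^{t_0}c(t)e^{-t}dt}{c(t_0)e^{-t_0}}$ in the limit.
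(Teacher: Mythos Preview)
Your proof is correct and follows essentially the same route as the paper: apply Lemma~\ref{l: estimates} with $F=F_{t_0}$ and $\varphi=\psi$, use the key weight comparison $c(-\psi)\le e^{-\psi+v_{t_0,B}(\psi)}c(-v_{t_0,B}(\psi))$, bound the strip mass by $G_{F,\psi,c}(t_0)-G_{F,\psi,c}(t_0+B_j)$, and finish via the orthogonality of Lemma~\ref{l:3}. The only tactical difference is that the paper keeps the strip term and shows the resulting cross term $\|\widetilde F_j-F_{t_0}\|_2\,(\int_{\{-t_0-B_j<\psi<-t_0\}}|F_{t_0}|^2c(-\psi))^{1/2}\to 0$ directly for the sequence $\widetilde F_j$, whereas you drop the nonnegative strip contribution, extract a Montel limit $\widetilde F_\infty$, and apply Fatou and the orthogonality to $\widetilde F_\infty$; both yield the same sharp constant.
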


\begin{proof}
	By Lemma \ref{l:3}, there exists a holomorphic function $F_{t_0}$ on $\{\psi<-t_0\}$, such that $(F_{t_0}-F,z_0)\in\mathcal{I}(\psi)_{z_0}$ and $\int_{\{\psi<-t_0\}}|F_{t_0}|^2c(-\psi)=G_{F,\psi,c}(t_0)$.
	
	It suffices to consider that $\liminf_{B\rightarrow0+}\frac{G_{F,\psi,c}(t_0)-G_{F,\psi,c}(t_0+B)}{B}\in[0,+\infty)$, because of the decreasing property of $G_{F,\psi,c}(t)$. Then there exists a sequence  $\{B_j\} $ ($B_j\rightarrow0+$, as $j\rightarrow+\infty$) such that
	\begin{equation}
		\label{eq:l52}
		\lim_{j\rightarrow+\infty}\frac{G_{F,\psi,c}(t_0)-G_{F,\psi,c}(t_0+B_j)}{B_j}=\liminf_{B\rightarrow0+}\frac{G_{F,\psi,c}(t_0)-G_{F,\psi,c}(t_0+B)}{B}	
		\end{equation}
		and $\left\{\frac{G_{F,\psi,c}(t_0)-G_{F,\psi,c}(t_0+B_j)}{B_j}\right\}_{j\in{\mathbb{N}}}$ is bounded.

		Lemma \ref{l: estimates} and Remark \ref{r} ($\varphi\sim\psi $ and $F\sim F_{t_0}$, where $\sim$ means the former replaced by the latter) show that for any $B_j$, there exists holomorphic function $\widetilde{F}_j$ on $D$ such that $(\widetilde{F}_j-F_{t_0},z_0)\in\mathcal I(\psi)_{z_0}$ and
		\begin{equation}
			\label{eq:l53}
			\begin{split}
				&\int_D|\widetilde F_j-(1-b_{t_0,B_j}(\psi))F_{t_0}|^2c(-v_{t_0,B_j}(\psi))e^{v_{t_0,B_j}(\psi)-\psi}
			\\\leq &\int_0^{t_0+B_j}c(t)e^{-t}dt\cdot\int_D\frac{1}{B_j}\mathbb{I}_{\{-t_0-B_j<\psi<-t_0\}}|F_{t_0}|^2e^{-\psi}
			\\\leq &\frac{e^{t_0+B_j}\int_0^{t_0+B_j}c(t)e^{-t}dt}{\inf_{t\in(t_0,t_0+B_j)}c(t)}\cdot\int_D\frac{1}{B_j}\mathbb I_{\{-t_0-B_j<\psi<-t_0\}}|F_{t_0}|^2c(-\psi)
			\\	\leq &\frac{e^{t_0+B_j}\int_0^{t_0+B_j}c(t)e^{-t}dt}{\inf_{t\in(t_0,t_0+B_j)}c(t)}\cdot\frac{G_{F,\psi,c}(t_0)-G_{F,\psi,c}(t_0+B_j)}{B_j}.
			\end{split}
		\end{equation}
		As $-t\leq{v_{t_0,B_j}(-t)}$ for any $t\geq0$, the decreasing property of $c(t)e^{-t}$ shows that $c(t)e^{-t}\leq c(-v_{t_0,B_j}(-t))e^{v_{t_0,B_j}(-t)}$ for any $t\in(0,+\infty)$, which implies that
		\begin{equation}
		\label{eq:l54}
			c(-\psi)\leq c(-v_{t_0,B_j}(\psi))e^{v_{t_0,B_j}(\psi)-\psi}.		
			\end{equation}
			 Then combining inequality \eqref{eq:l53} and inequality \eqref{eq:l54}, we obtain that
			\begin{equation}
				\label{eq:l56}
				\begin{split}
					&\int_D|\widetilde F_j-(1-b_{t_0,B_j}(\psi))F_{t_0}|^2c(-\psi)                    \\	\leq &\frac{e^{t_0+B_j}\int_0^{t_0+B_j}c(t)e^{-t}dt}{\inf_{t\in(t_0,t_0+B_j)}c(t)}\cdot\frac{G_{F,\psi,c}(t_0)-G_{F,\psi,c}(t_0+B_j)}{B_j}.
				\end{split}
			\end{equation}
			
			Firstly, we will prove that  $\int_D|\widetilde F_j|^2c(-\psi)$ is bounded with respect to $j$.
			
			Note that
			\begin{displaymath}
				\begin{split}
				&\left(\int_D|\widetilde F_j|^2c(-\psi)\right)^{\frac{1}{2}}- \left(\int_D|(1-b_{t_0,B_j}(\psi))F_{t_0}|^2c(-\psi) \right)^{\frac{1}{2}}
				\\\leq & \left(\int_D|\widetilde F_j-(1-b_{t_0,B_j}(\psi))F_{t_0}|^2c(-\psi) \right)^{\frac{1}{2}}	,			
				\end{split}			
				\end{displaymath}
			then we obtain that
			\begin{displaymath}
			\begin{split}
				 \left(\int_D|\widetilde F_j|^2c(-\psi) \right)^{\frac{1}{2}}	\leq	 & \left(\int_D|(1-b_{t_0,B_j}(\psi))F_{t_0}|^2c(-\psi) \right)^{\frac{1}{2}}
				\\&+ 	\left(\frac{e^{t_0+B_j}\int_0^{t_0+B_j}c(t)e^{-t}dt}{\inf_{t\in(t_0,t_0+B_j)}c(t)}\cdot\frac{G_{F,\psi,c}(t_0)-G_{F,\psi,c}(t_0+B_j)}{B_j} \right)^{\frac{1}{2}}	.				
			\end{split}				
			\end{displaymath}
			Since $\left\{\frac{G_{F,\psi,c}(t_0)-G_{F,\psi,c}(t_0+B_j)}{B_j}\right\}_{j\in{\mathbb{N}}}$ is bounded and
			\begin{displaymath}
				\int_D|(1-b_{t_0,B_j}(\psi))F_{t_0}|^2c(-\psi)\leq \int_{\{\psi<-t_0\}}|F_{t_0}|^2c(-\psi)<+\infty,			
				\end{displaymath}
				then we obtain that $\int_D|\widetilde F_j|^2c(-\psi)$ is bounded with respect to $j$.
				
				Secondly, we will prove the main result.
				
				It follows from $b_{t_0,B_j}(\psi)=1$ on $\{\psi\geq-t_0\}$ that 	
				\begin{equation}
					\label{eq:l57}
					\begin{split}
					&\int_D|\widetilde F_j-(1-b_{t_0,B_j}(\psi))F_{t_0}|^2c(-\psi)
					\\=&\int_{\{\psi\geq-t_0\}}|\widetilde F_j|^2c(-\psi)+	\int_{\{\psi<-t_0\}}|\widetilde F_j-(1-b_{t_0,B_j}(\psi))F_{t_0}|^2c(-\psi).					
					\end{split}
				\end{equation}
			
			Denote that $||\cdot||_{2}:= \left(\int_{\{\psi<-t_{0}\}}|\cdot|^{2}c(-\psi) \right)^{\frac{1}{2}}$.
It is clear that
\begin{equation}
\label{eq:l58}
\begin{split}
&||\tilde{F}_{j}-(1-b_{t_0,B_j}(\psi))F_{t_{0}}||_{2}^{2}
\\\geq&\left(||\tilde{F}_{j}-F_{t_{0}}||_{2}-||b_{t_0,B_{j}}(\psi)F_{t_{0}}||_{2}\right)^{2}
\\\geq&||\tilde{F}_{j}-F_{t_{0}}||_{2}^{2}-2||\tilde{F}_{j}-F_{t_{0}}||_{2}||b_{t_0,B_{j}}(\psi)F_{t_{0}}||_{2}
\\\geq&
||\tilde{F}_{j}-F_{t_{0}}||_{2}^{2}-2||\tilde{F}_{j}-F_{t_{0}}||_{2}
 \left(\int_{\{-t_{0}-B_{j}<\psi<-t_{0}\}}|F_{t_{0}}|^{2}c(-\psi) \right)^{\frac{1}{2}},
\end{split}
\end{equation}
where the last inequality follows from $0\leq b_{t_{0},B_{j}}(\psi)\leq 1$ and $b_{t_{0},B_{j}}(\psi)=0$ on $\{\psi\leq -t_{0}-B_{j}\}$.

Combining equality \eqref{eq:l57}, inequality \eqref{eq:l58} and equality \eqref{eq:l31},
we obtain that
\begin{equation}
\label{eq:l59}
\begin{split}
&\int_{D}|\tilde{F}_{j}-(1-b_{t_{0},B_{j}}(\psi))F_{t_{0}}|^{2}c(-\psi)
\\=&\int_{\{\psi\geq-t_{0}\}}|\tilde{F}_{j}|^{2}c(-\psi)+||\tilde{F}_{j}-(1-b_{t_0,B_j}(\psi))F_{t_{0}}||_{2}^{2}
\\\geq&
\int_{\{\psi\geq-t_{0}\}}|\tilde{F}_{j}|^{2}c(-\psi)+||\tilde{F}_{j}-F_{t_{0}}||_{2}^{2}-2||\tilde{F}_{j}-F_{t_{0}}||_{2}||b_{t_0,B_{j}}(\psi)F_{t_{0}}||_{2}
\\\geq&
\int_{\{\psi\geq-t_{0}\}}|\tilde{F}_{j}|^{2}c(-\psi)+
||\tilde{F}_{j}||_{2}^{2}-||F_{t_{0}}||_{2}^{2}
\\&-2||\tilde{F}_{j}-F_{t_{0}}||_{2}
 \left(\int_{\{-t_{0}-B_{j}<\psi<-t_{0}\}}|F_{t_{0}}|^{2}c(-\psi) \right)^{\frac{1}{2}}
\\=&
\int_{D}|\tilde{F}_{j}|^{2}c(-\psi)-||F_{t_{0}}||_{2}^{2}
\\&-2||\tilde{F}_{j}-F_{t_{0}}||_{2}
 \left(\int_{\{-t_{0}-B_{j}<\psi<-t_{0}\}}|F_{t_{0}}|^{2}c(-\psi) \right)^{\frac{1}{2}}.
\end{split}
\end{equation}

It follows from equality \eqref{eq:l31} that
\begin{equation}
\label{eq:l510}
\begin{split}
||\tilde{F}_{j}-F_{t_{0}}||_{2}
= \left(||\tilde{F}_{j}||_{2}^{2}-||F_{t_{0}}||_{2}^{2} \right)^{\frac{1}{2}}
\leq||\tilde{F}_{j}||_{2}\leq \left(\int_{D}|\tilde{F}_{j}|^{2}c(-\psi) \right)^{\frac{1}{2}}.
\end{split}
\end{equation}
Since $\int_{D}|\tilde{F}_{j}|^{2}c(-\psi)$ is bounded with respect to $j$,
inequality \eqref{eq:l510} implies that
$ \left(\int_{\{\psi<-t_{0}\}}|\tilde{F}_{j}-F_{t_{0}}|^{2}c(-\psi) \right)^{\frac{1}{2}}$ is bounded with respect to $j$.
Using the dominated convergence theorem and $\int_{\{\psi<-t_{0}\}}|F_{t_{0}}|^{2}c(-\psi)<+\infty$,
we obtain that
\begin{displaymath}
\lim_{j\to +\infty}||\tilde{F}_{j}-F_{t_{0}}||_{2}
 \left(\int_{\{-t_{0}-B_{j}<\psi <-t_{0}\}}|F_{t_{0}}|^{2}c(-\psi) \right)^{\frac{1}{2}}=0.	
\end{displaymath}
Combining with inequality \eqref{eq:l59},
we obtain
\begin{equation}
\label{eq:l511}
\begin{split}
&\liminf_{j\to+\infty}\int_{D}|\tilde{F}_{j}-(1-b_{t_{0},B_{j}}(\psi))F_{t_{0}}|^{2}c(-\psi)
\\\geq&\liminf_{j\to+\infty}\int_{D}|\tilde{F}_{j}|^{2}c(-\psi)-||F_{t_{0}}||_{2}^{2}.
\end{split}
\end{equation}

Using inequality \eqref{eq:l56} and inequality \eqref{eq:l511}, we obtain
\begin{equation}
\label{eq:l512}
\begin{split}
&\frac{\int_{0}^{t_{0}}c(t)e^{-t}dt}{c(t_{0})e^{-t_{0}}}
\lim_{j\to+\infty}\left(\frac{G_{F,\psi,c}(t_{0})-G_{F,\psi,c}(t_{0}+B_{j})}{B_{j}}\right)
\\=&\lim_{j\to+\infty}\frac{e^{t_{0}+B_{j}}\int_{0}^{t_{0}+B_{j}}c(t)e^{-t}dt}{\inf_{t\in(t_{0},t_{0}+B_{j})}c(t)}
\left(\frac{G_{F,\psi,c}(t_{0})-G_{F,\psi,c}(t_{0}+B_{j})}{B_{j}}\right)
\\\geq&\liminf_{j\to+\infty}\int_{D}|\tilde{F}_{j}-(1-b_{t_{0},B_{j}}(\psi))F_{t_{0}}|^{2}c(-\psi)
\\\geq&\liminf_{j\to+\infty}\int_{D}|\tilde{F}_{j}|^{2}c(-\psi)-||F_{t_{0}}||_{2}^{2}
\\\geq& G_{F,\psi,c}(0)-G_{F,\psi,c}(t_{0}).
\end{split}
\end{equation}
This proves Lemma \ref{l:5}.			
\end{proof}

The following Lemma will be used to prove Theorem \ref{thm: support function}.
\begin{Lemma}
	\label{l:6}
	Let $F$ be a holomorphic function on pseudoconvex domain $D$, and let $\psi$ be a negative plurisubharmonic function on $D$. Assume that $\int_{\{\psi\geq-t\}}|F|^2e^{-\psi}<+\infty$ for a given positive number $t$. Then
	\begin{equation}
		\label{eq:l61}
		\int_{\{\psi\geq-t\}}|F|^2e^{-\psi}=	\int_{-\infty}^t\left(\int_{\{-t\leq\psi<-l\}}|F|^2\right)e^ldl.	
	\end{equation}	
\end{Lemma}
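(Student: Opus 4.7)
The plan is to prove the equality by a layer-cake / Fubini decomposition, using the identity
\[
e^{-\psi(z)}=\int_{-\infty}^{-\psi(z)}e^{l}\,dl,
\]
which holds pointwise for any finite value of $\psi(z)$ (and in the limiting sense elsewhere, but this will not be needed on $\{\psi\geq -t\}$ since $e^{-\psi}\leq e^{t}<+\infty$ there up to $\psi$-finite points).

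First, I would substitute this representation of $e^{-\psi(z)}$ inside the left-hand integral, writing
\[
\int_{\{\psi\geq -t\}}|F(z)|^{2}e^{-\psi(z)}\,dV(z)
=\int_{\{\psi\geq -t\}}|F(z)|^{2}\left(\int_{-\infty}^{-\psi(z)}e^{l}\,dl\right)dV(z).
\]
Next, since the integrand $|F(z)|^{2}e^{l}\mathbb{I}_{\{l<-\psi(z)\}}\mathbb{I}_{\{\psi(z)\geq -t\}}$ is nonnegative and measurable, I would apply Tonelli's theorem to interchange the order of integration. The hypothesis $\int_{\{\psi\geq -t\}}|F|^{2}e^{-\psi}<+\infty$ guarantees that every quantity in sight is finite, so no delicate justification beyond measurability is needed.

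After the swap, the inner region in $z$ is characterized by the two conditions $l<-\psi(z)$ and $\psi(z)\geq -t$, i.e.\ $-t\leq \psi(z)<-l$. This region is empty unless $-l>-t$, i.e.\ unless $l<t$, which pins down the outer range of integration as $(-\infty,t)$. Thus the right-hand side
\[
\int_{-\infty}^{t}e^{l}\left(\int_{\{-t\leq \psi<-l\}}|F|^{2}\,dV\right)dl
\]
emerges directly. I do not anticipate any genuine obstacle here; the only routine care is to verify that the set of points where $\psi(z)=-\infty$ (which carries measure zero contribution in view of the integrability hypothesis, as $|F|^{2}e^{-\psi}$ is $L^{1}$ there only if $|F|=0$ a.e.\ on that set) does not affect the equality, and that the boundary values $\psi=-t$ and $\psi=-l$ are immaterial since they form null sets in the $(z,l)$ product or can be absorbed into either open or closed endpoints without changing the integrals.
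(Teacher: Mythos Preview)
Your argument is correct and is essentially the same approach as the paper's: the paper invokes the layer-cake formula $\int_X f\,d\mu=\int_0^{+\infty}\mu(\{f>l\})\,dl$ with $f=e^{-\psi}$ and $d\mu=|F|^2\,dV$, then changes variables $s=e^{l}$, which is exactly your identity $e^{-\psi}=\int_{-\infty}^{-\psi}e^{l}\,dl$ followed by Tonelli. One small simplification: on $\{\psi\geq -t\}$ the function $\psi$ is automatically finite (indeed $e^{-\psi}\leq e^{t}$), so your closing remarks about the set $\{\psi=-\infty\}$ are unnecessary.
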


\begin{proof}
	It is clear that the lemma directly follows from the basic formula
	\begin{equation}
	\label{eq:l62}
		\int_Xfd\mu=\int_0^{+\infty}\mu(\{x\in X:f(x)>l\})dl
	\end{equation}
  for a measurable function $f:X\rightarrow[0,+\infty)$ with $X=\{\psi>-t\}$, $f=e^{-\psi}$, and $d\mu=|F|^2dV_{2n}$, where $dV_{2n}$ is the Lebesgue measure on $\mathbb C^n$.

  Next we will prove the equality \eqref{eq:l62}.
  \begin{displaymath}
  	\begin{split}
  		\int_Xfd\mu &=\int_X\left(\int_0^{f(x)}dl\right)d\mu
  		\\&=\int_X\left(\int_0^{+\infty} \theta_f(l,x) dl\right)d\mu
  		\\&=\int_0^{+\infty}\left(\int_X\theta_f(l,x)d\mu\right)dl
  		\\&=\int_0^{+\infty}\mu(\{x\in X:f(x)>l\})dl,  		\end{split}
  \end{displaymath}
  where
    $\theta_f(l,x) =
  \left\{ \begin{array}{cc}
  1 & \textrm {if $l<f(x)$}\\
  0 & \textrm{if $l\geq f(x)$}  	
  \end{array} \right.$
  is a function defined on $\mathbb R\times X$. Then the equality \eqref{eq:l62} has thus been proved.
  \end{proof}

\section{Proof of Theorem \ref{thm: support function}}
Firstly, we prove the following proposition.
\begin{Proposition}
\label{p}
	Let $F$ be a holomorphic function on $D$, $z_0\in D$, and $\psi$ be a negative plurisubharmonic function on $D$. Then the inequality
	\begin{equation}
		\label{eq:p1}
		\int_{\{\psi<-t\}}|F|^2
c(-\psi)\geq \frac{\int_t^{+\infty}c(l)e^{-l}dl}{\int_0^{+\infty}c(l)e^{-l}dl}G_{F,\psi,c}(0)	
    \end{equation}
    holds for any $t\geq0$, which is sharp.

     Especially, if $G_{F,\psi,c}(0)=+\infty$, then $\int_{\{\psi<-t\}}|F|^2
c(-\psi)=+\infty$ for  any $t\geq0$.
\end{Proposition}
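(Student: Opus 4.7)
I may assume $\psi(z_{0})=-\infty$ throughout, since otherwise $\mathcal I(\psi)_{z_{0}}=\mathcal O_{z_{0}}$ and hence $G_{F,\psi,c}(t)=0$ for all $t$, making \eqref{eq:p1} trivial and the ``Especially'' clause vacuous. Write $H(t):=\int_{t}^{+\infty}c(l)e^{-l}\,dl$. Taking $F_{1}=F$ in the infimum defining $G_{F,\psi,c}(t)$ gives the tautological bound $\int_{\{\psi<-t\}}|F|^{2}c(-\psi)\geq G_{F,\psi,c}(t)$, so it suffices to prove the pure $G$-inequality $G_{F,\psi,c}(t)\geq \tfrac{H(t)}{H(0)}G_{F,\psi,c}(0)$. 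The case $G_{F,\psi,c}(0)=+\infty$ reduces to the implication: if $G_{F,\psi,c}(t_{\ast})<+\infty$ for some $t_{\ast}\geq 0$, then $G_{F,\psi,c}(0)<+\infty$. For this, I would feed the minimizer $F_{t_{\ast}}$ from Lemma \ref{l:3} into Lemma \ref{l: estimates} (with $t_{0}=t_{\ast}$, $B=1$, $\varphi=\psi$, and $F_{t_{\ast}}$ in place of $F$; hypothesis \eqref{eq:l1} is checked using the positive lower bound of $c$ on $(t_{\ast},+\infty)$ given by property (3) of class $P$), producing $\widetilde{F}\in \mathcal O(D)$ with $(\widetilde{F}-F,z_{0})\in \mathcal I(\psi)_{z_{0}}$ and $\int_{D}|\widetilde{F}|^{2}c(-\psi)<+\infty$, an admissible candidate for $G_{F,\psi,c}(0)$. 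The contrapositive then simultaneously yields the ``Especially'' clause and the $G_{F,\psi,c}(0)=+\infty$ case of \eqref{eq:p1}.

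Now assume $G_{F,\psi,c}(0)<+\infty$. Set $\phi(t):=G_{F,\psi,c}(0)-G_{F,\psi,c}(t)$ and $h(t):=\int_{0}^{t}c(s)e^{-s}\,ds=H(0)-H(t)$. Then Lemma \ref{l:5} reads
\[
h'(t_{0})\,\phi(t_{0})\leq h(t_{0})\,D^{+}\phi(t_{0})\qquad\text{for all } t_{0}>0,
\]
with $D^{+}\phi$ the lower right Dini derivative. Since $h$ is $C^{1}$ and positive on $(0,+\infty)$, this is pointwise equivalent to $D^{+}(\phi/h)\geq 0$. The right-continuity of $G_{F,\psi,c}$ furnished by Lemma \ref{l:4} transfers to $\phi/h$, and a standard Dini-derivative argument (valid for right-continuous functions) then forces $\phi/h$ to be non-decreasing on $(0,+\infty)$. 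Letting $t\to+\infty$ and using $G_{F,\psi,c}(t)\to 0$ (Lemma \ref{l:4}) together with $h(t)\to H(0)$, we see $\sup_{t>0}\phi(t)/h(t)=G_{F,\psi,c}(0)/H(0)$, so $\phi(t)\leq \tfrac{h(t)}{H(0)}G_{F,\psi,c}(0)$, which rearranges to $G_{F,\psi,c}(t)\geq\tfrac{H(t)}{H(0)}G_{F,\psi,c}(0)$; the case $t=0$ is the trivial identity.

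Sharpness comes from the Remark \ref{r:optimality} setup applied to an arbitrary $c\in P$: on $\Delta$ with $F\equiv 1$, $\psi=2\log|z|$, $z_{0}=0$, one has $\mathcal I(\psi)_{0}=\mathfrak m_{0}$, so admissibility in $G_{F,\psi,c}(t)$ reduces to $F_{1}(0)=1$; a power-series expansion in $z$ shows the unique optimum is $F_{1}\equiv 1$, yielding $G_{F,\psi,c}(t)=\pi H(t)$ for every $t\geq 0$ and hence equality in \eqref{eq:p1}. The main obstacle I anticipate is the passage from Lemma \ref{l:5} to the monotonicity of $\phi/h$: Lemma \ref{l:4} only guarantees right-continuity of $G_{F,\psi,c}$ (upward left-jumps of $\phi$ are \emph{a priori} possible), so the classical mean-value-theorem reasoning must be replaced by the Dini-derivative version appropriate to one-sided-continuous functions.
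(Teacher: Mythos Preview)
Your approach is correct and coincides with the concavity argument that the paper records in its Appendix (Proposition~\ref{p2} via Lemma~\ref{l:22}): monotonicity of $\phi/h$ is exactly concavity of $r\mapsto G_{F,\psi,c}(h^{-1}(r))$, and the paper deduces it from (a shifted form of) Lemma~\ref{l:5} together with the abstract criterion Lemma~\ref{l:22}. The paper's \emph{main-text} proof of Proposition~\ref{p}, however, is packaged differently: it sets $H(t)=G_{F,\psi,c}(t)-\tfrac{\int_t^\infty c\,e^{-l}\,dl}{\int_0^\infty c\,e^{-l}\,dl}\,G_{F,\psi,c}(0)$, assumes $H<0$ somewhere, uses the lower semicontinuity of $G_{F,\psi,c}$ from Lemma~\ref{l:4} (plus $H(0)=0=\lim_{t\to\infty}H(t)$) to locate an interior minimizer $t_0$, and then contradicts Lemma~\ref{l:5} at $t_0$. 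Unwound, this minimizer argument is precisely a proof of the Dini-to-monotonicity step you invoke, so the two routes are essentially the same idea in different clothing. Your handling of the case $G_{F,\psi,c}(0)=+\infty$ and of sharpness also matches the paper's (the paper feeds $F$ itself, rather than a minimizer $F_{t_\ast}$, into Lemma~\ref{l: estimates}, but either works).

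One genuine caution about your anticipated obstacle: right-continuity alone does \emph{not} yield ``$D_+\!\ge 0\Rightarrow$ non-decreasing''. For instance $f=-\mathbb{I}_{[1,\infty)}$ is right-continuous with $D_+f\equiv 0$, yet drops at $t=1$. What rescues the argument here is that Lemma~\ref{l:4} gives more than right-continuity: $G_{F,\psi,c}$ is \emph{lower semicontinuous}, so $\phi$ (and hence $\phi/h$) is upper semicontinuous, i.e.\ left-jumps can only go upward. With that extra hypothesis the Dini step is valid---this is exactly the content of Lemma~\ref{l:22}, and exactly what the paper's contradiction-by-minimizer argument establishes directly. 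So your plan goes through once you invoke the full lower semicontinuity in Lemma~\ref{l:4} rather than only its right-continuity clause.
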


\begin{Remark}
Let $D=\Delta\subset\mathbb C$ be the unit disc, $z_0=o$ the origin of $\mathbb C$, $\psi =2\log{|z|}$, $F\equiv1$, and $c\equiv1$. It is clear that 
$$e^{-t}\pi=\int_{\{\psi<-t\}}|F|^2
c(-\psi)\geq\frac{\int_t^{+\infty}c(l)e^{-l}dl}{\int_0^{+\infty}c(l)e^{-l}dl}G_{F,\psi,c}(0)	=e^{-t}\pi,$$
 which is sharp.	
\end{Remark}

\begin{proof}[Proof of Proposition 3.1]
When $\psi(z_0)>-\infty$, it is clear that $G_{F,\psi,c}(0)=0$. Then the inequality \eqref{eq:p1} holds. Next, we will only consider the case  $\psi(z_0)=-\infty$.

We prove Proposition \ref{p} in two steps, i.e. the case $G_{F,\psi,c}(0)<+\infty$ and the case $G_{F,\psi,c}(0)=+\infty$.

Step 1. We prove the case $G_{F,\psi,c}(0)<+\infty$. As $\int_{\{\psi<-t\}}|F|^2c(-\psi)\geq G_{F,\psi,c}(t)$ for any $t\in[0,+\infty)$, then it suffices to prove that $G_{F,\psi,c}(t)\geq\frac{\int_t^{+\infty}c(l)e^{-l}dl}{\int_0^{+\infty}c(l)e^{-l}dl}G_{F,\psi,c}(0)$ for any $t\in[0,+\infty)$.

 Let $H(t):=G_{F,\psi,c}(t)-\frac{\int_t^{+\infty}c(l)e^{-l}dl}{\int_0^{+\infty}c(l)e^{-l}dl}G_{F,\psi,c}(0)$.We prove $H(t)\geq0$ by contradiction: if not, then there exists $t$ such that $H(t)<0$.

 Note that $G_{F,\psi,c}(t)\in[0,G_{F,\psi,c}(0)]$ is bounded on $[0,+\infty)$, then $H(t)$ is also bounded on $[0,+\infty)$, which implies that $\inf_{[0,+\infty)}H(t)$ is finite.

By Lemma \ref{l:4}, it is clear that $\lim_{t\rightarrow0+}H(t)=H(0)=0$ and $\lim_{t\rightarrow+\infty}H(t)=0$. Then it follows from $\inf_{[0,+\infty)}H(t)<0$ that there exists a closed interval $[a,b]\subset(0,+\infty)$ such that $\inf_{[a,b]}H(t)=\inf_{[0,+\infty)}H(t)$. Since $G_{F,\psi,c}$ is lower semi-continuous, then $H(t)$ is also lower semi-continuous, which implies that there exists $t_0\in[a,b]$ such that $H(t_0)=\inf_{[0,+\infty)}H(t)<0$.

As $H(t_0)=\inf_{[0,+\infty)}H(t)$, then it follows that $\liminf_{B\rightarrow0+}\frac{H(t_0)-H(t_0+B)}{B}\leq0$. Combining with $H(t_0)<0$,then we obtain that
\begin{equation}
\label{eq:p2}
H(t_0)+\frac{\int_0^{t_0}c(t)e^{-t}dt}{c(t_0)e^{-t_0}}\liminf_{B\rightarrow0+}\frac{H(t_0)-H(t_0+B)}{B}<0	.
\end{equation}

Note that
\begin{displaymath}
	\begin{split}
		&H(t_0)+\frac{\int_0^{t_0}c(t)e^{-t}dt}{c(t_0)e^{-t_0}}\liminf_{B\rightarrow0+}\frac{H(t_0)-H(t_0+B)}{B}
		\\=&G_{F,\psi,c}(t_0)-\frac{\int_{t_0}^{+\infty}c(t)e^{-t}dt}{\int_0^{+\infty}c(t)e^{-t}dt}G_{F,\psi,c}(0)
		     \\&+\frac{\int_0^{t_0}c(t)e^{-t}dt}{c(t_0)e^{-t_0}}\liminf_{B\rightarrow0+}\frac{G_{F,\psi,c}(t_0)-G_{F,\psi,c}(t_0+B)-\frac{\int_{t_0}^{t_0+B}c(t)e^{-t}dt}{\int_{0}^{+\infty}c(t)e^{-t}dt}G_{F,\psi,c}(0)}{B}
		     \\=&G_{F,\psi,c}(t_0)+\frac{\int_0^{t_0}c(t)e^{-t}dt}{c(t_0)e^{-t_0}}\liminf_{B\rightarrow0+}\frac{G_{F,\psi,c}(t_0)-G_{F,\psi,c}(t_0+B)}{B}	
		         \\&-G_{F,\psi,c}(0)\left(\frac{\int_{t_0}^{+\infty}c(t)e^{-t}dt}{\int_0^{+\infty}c(t)e^{-t}dt}+\frac{\int_{0}^{t_0}c(t)e^{-t}dt}{\int_0^{+\infty}c(t)e^{-t}dt}\right)	
		      \\=&G_{F,\psi,c}(t_0)-	G_{F,\psi,c}(0)+\frac{\int_0^{t_0}c(t)e^{-t}dt}{c(t_0)e^{-t_0}}\liminf_{B\rightarrow0+}\frac{G_{F,\psi,c}(t_0)-G_{F,\psi,c}(t_0+B)}{B},		
		      \end{split}
\end{displaymath}
then it follows from Lemma \ref{l:5} that $$H(t_0)+\frac{\int_0^{t_0}c(t)e^{-t}dt}{c(t_0)e^{-t_0}}\liminf_{B\rightarrow0+}\frac{H(t_0)-H(t_0+B)}{B}\geq0,$$
which contradicts inequality \eqref{eq:p2}. Then the case $G_{F,\psi,c}(0)<+\infty$ has thus been proved.

Step 2. We prove the case $G_{F,\psi,c}(0)=+\infty$ by contradiction: if not, then integral $\int_{\{\psi <-t_0\}}|F|^2c(-\psi)$ is finite for some $t_0>0$ (when $t_0=0$, $\int_{\{\psi <-t_0\}}|F|^2c(-\psi)\geq G_{F,\psi,c}(0)=+\infty$). It follows from Lemma \ref{l: estimates} and Remark \ref{r} that there exists a holomorphic function $\widetilde F$ on $D$ satisfying $$(\widetilde{F}-F,z_0)\in\mathcal I(\psi)_{z_0}$$
and
\begin{equation}
	\label{eq:p3}
	\begin{split}
	&\int_D|\widetilde F-(1-b_{t_0,B}(\psi))F|^2c(-\psi)	\\
	\leq &\int_D|\widetilde F-(1-b_{t_0,B}(\psi))F|^2c(-v_{t_0,B}(\psi))e^{v_{t_0,B}(\psi)-\psi}	\\
	\leq &\int_0^{t_0+B}c(t)e^{-t}dt\int_D\frac{1}{B}\mathbb I_{\{-t_0-B<\psi<-t_0\}}|F|^2e^{-\psi}.
	\end{split}
	\end{equation}	
	Note that
	\begin{displaymath}
		\begin{split}
			&\left\vert \left(\int_D|\widetilde F|^2c(-\psi) \right)^{\frac{1}{2}}- \left(\int_D|(1-b_{t_0,B}(\psi)) F|^2c(-\psi) \right)^{\frac{1}{2}}\right\vert\\
		\leq & \left(\int_D|\widetilde F-(1-b_{t_0,B}(\psi))F|^2c(-\psi) \right)^{\frac{1}{2}},
		\end{split}
	\end{displaymath}
 then it follows from inequality \eqref{eq:p3} that
 \begin{equation}
 	\begin{split}
 		 \left(\int_D|\widetilde F|^2c(-\psi) \right)^{\frac{1}{2}}\leq & \left(\int_D|(1-b_{t_0,B}(\psi)) F|^2c(-\psi) \right)^{\frac{1}{2}}\\
 		&+ \left(\int_0^{t_0+B}c(t)e^{-t}dt\int_D\frac{1}{B}\mathbb I_{\{-t_0-B<\psi<-t_0\}}|F|^2e^{-\psi} \right)^{\frac{1}{2}}. 		
  	\end{split}
 \end{equation}
 Since $b_{t_0,B}(\psi)=1$ on $\{\psi\geq t_0\}$, $0\leq b_{t_0,B}(\psi)\leq1$, $\int_{\{\psi<-t_0\}}|F|^2c(-\psi)<+\infty$, and $c(-\psi)$ has positive lower bound on $\{\psi<-t_0\}$, then we obtain that $$ \left(\int_D|(1-b_{t_0,B}(\psi)) F|^2c(-\psi) \right)^{\frac{1}{2}}<+\infty$$ and$$ \left(\int_0^{t_0+B}c(t)e^{-t}dt\int_D\frac{1}{B}\mathbb I_{\{-t_0-B<\psi<-t_0\}}|F|^2e^{-\psi} \right)^{\frac{1}{2}}<+\infty,$$
 which implies that $$\int_D|\widetilde F|^2c(-\psi)<+\infty.$$
 Then we obtain $G_{F,\psi,c}(0)\leq \int_D|\widetilde F|^2c(-\psi)<+\infty$, which contradicts $G_{F,\psi,c}(0)=+\infty$. The case $G_{F,\psi,c}(0)=+\infty$ has been proved.
 \end{proof}

In the following part, we will prove Theorem \ref{thm: support function} by using Proposition \ref{p} and Lemma \ref{l:6}.

Firstly, we will construct a family of functions $\{c_t^n(x)\}_{n\in \mathbb{N}}\subset P$, where $t$ is the given positive number in Theorem \ref{thm: support function}.

Let $f(x)=\left\{ \begin{array}{cc}
	e^{-\frac{1}{1-(x-1)^2}} & \textrm{if $|x-1|<1$}\\
	0 & \textrm{if $|x-1|\geq1$}
    \end{array} \right. $ be a real function defined on $\mathbb R$.
    It is clear that $f(x)\in C_0^\infty(\mathbb R)$ and $f(x)\geq0$ for any $x\in \mathbb R$. Then let $g_n(x)=\frac{n}{(n+1)d}\int_0^{nx}f(s)ds$, where $d=\int_{\mathbb R}f(s)ds$. It follows that $g_n(x)$ is increasing with respect to $x$, $g_n(x)\leq g_{n+1}(x)$ for any $n\in \mathbb{N}$ and $x\in \mathbb R$, and $\lim_{n\rightarrow+\infty}g_n(x)=\mathbb I_{\{s\in \mathbb R: s>0\}}(x)$ for all $x\in \mathbb R$.

    Now we construct $\{c_t^n(x)\}_{n\in \mathbb{N}}$ by setting $c_t^n(x)=1-g_n(x-t)$. It follows from the properties of $\{g_n(x)\}_{n\in \mathbb{N}}$ that $c_t^n(x)$ is decreasing with respect to $x$, $c_t^n(x)\geq c_t^{n+1}(x)$ for any $n\in \mathbb{N}$ and $x\in \mathbb R$, and $\lim_{n\rightarrow+\infty}c_t^n(x)=\mathbb I_{\{s\in \mathbb R: s\leq t\}}(x)$ for all $x\in \mathbb R$. Note that $c_t^n(x)\in[\frac{1}{n+1},1]$ on $x\in(0,+\infty)$, then $c_t^n(x)\in P$ for any $n\in \mathbb{N}$.

    Secondly, we will prove
    \begin{equation}
    \label{eq:proof4}
    	\int_{\{-t\leq\psi<-l\}}|F|^2\geq\frac{e^{-l}-e^{-t}}{1-e^{-t}}C_{F,\psi,t}(z_0)
    	 \end{equation}
    for any $l\in [0,t)$.

    For the case $l=0$, inequality \eqref{eq:proof4} holds by the definition of $C_{F,\psi,t}(z_0)$, then we only consider the case $l>0$.

    It follows from Proposition \ref{p} that
    \begin{equation}
    	\label{eq:proof1}
    	\int_{\{\psi<-l\}}|F|^2
c_t^n(-\psi)\geq \frac{\int_l^{+\infty}c_t^n(s)e^{-s}ds}{\int_0^{+\infty}c_t^n(s)e^{-s}ds}G_{F,\psi,c_t^n}(0).
	    \end{equation}
	    By $\int_{\{\psi<-l\}}|F|^2<+\infty$ and the properties of $\{c_t^n\}_{n\in \mathbb{N}}$, we obtain that
	    \begin{equation}
	    	\label{eq:proof2}
	    	\lim_{n\rightarrow+\infty}\int_{\{\psi<-l\}}|F|^2
c_t^n(-\psi)=\int_{\{-t\leq\psi<-l\}}|F|^2.	    \end{equation}
As $c_t^n(x)\geq\mathbb I_{\{s\in\mathbb R:s\leq t\}}(x)$ for any $x>0$ and $n\in \mathbb{N}$, then it follows from the definition of $G_{F,\psi,c}$ and $C_{F,\psi ,t}(z_0)$ that
\begin{equation}
	\label{eq:proof3}
	G_{F,\psi,c_t^n}(0)\geq C_{F,\psi ,t}(z_0).
\end{equation}
Combining inequality \eqref{eq:proof1}, equality \eqref{eq:proof2}, and inequality \eqref{eq:proof3}, we obtain that
\begin{displaymath}
	\begin{split}
		\int_{\{-t\leq\psi<-l\}}|F|^2 &=	\lim_{n\rightarrow+\infty}\int_{\{\psi<-l\}}|F|^2
c_t^n(-\psi)\\
&\geq \lim_{n\rightarrow+\infty} \frac{\int_l^{+\infty}c_t^n(s)e^{-s}ds}{\int_0^{+\infty}c_t^n(s)e^{-s}ds}C_{F,\psi ,t}(z_0)\\
&=\frac{e^{-l}-e^{-t}}{1-e^{-t}}C_{F,\psi ,t}(z_0).
	\end{split}	
\end{displaymath}
Then inequality \eqref{eq:proof4} has been proved.

Finally, we will finish the proof of Theorem \ref{thm: support function}.

By Lemma \ref{l:6} and inequality \eqref{eq:proof4}, we obtain that
\begin{displaymath}
	\begin{split}
		\int_{D_t}|F|^2
e^{-\psi}&=\int_{-\infty}^t\left(\int_{\{-t\leq\psi<-l\}}|F|^2\right)e^ldl\\
&=\int_{-\infty}^0\left(\int_{\{-t\leq\psi<-l\}}|F|^2\right)e^ldl+\int_{0}^t\left(\int_{\{-t\leq\psi<-l\}}|F|^2\right)e^ldl\\
&\geq C_{F,\psi,t}(z_0)\left(\int_{-\infty}^0e^ldl+\int_0^t\frac{1-e^{l-t}}{1-e^{-t}}dl\right)\\
&=\frac{t}{1-e^{-t}}C_{F,\psi,t}(z_0).	\end{split}
\end{displaymath}
Then Theorem \ref{thm: support function} has thus been proved.

\section{Theorem \ref{thm: support function} implies Corollary \ref{soc}}
\label{p of soc}

It suffices to prove that  $\mathcal{I}(\psi)_{z_0}=\mathcal{I}_+(\psi)_{z_0}$ for any $z_0\in D$. When $\psi(z_0)>-\infty$, it is clear that $\mathcal{I}(\psi)_{z_0}=\mathcal{I}_+(\psi)_{z_0}=\mathcal O_{z_0}$.  In the following part, we will only consider
the case that $\psi(z_0)=-\infty$ and $\psi$ is not identically $-\infty$ in any of the neighborhoods of $z_0$.

Firstly, we will prove that there is a plurisubharmonic function $\widetilde\psi$ whose polar set is included in a (closed) analytic set, satisfying $\mathcal{I}(\psi)_{z_0}=\mathcal{I}(\widetilde\psi)_{z_0}$ and $\mathcal{I}_+(\psi)_{z_0}=\mathcal{I}_+(\widetilde\psi)_{z_0}$. Then it suffices to prove $\mathcal{I}(\widetilde\psi)_{z_0}=\mathcal{I}_+(\widetilde\psi)_{z_0}$.

Choosing $\epsilon>0$, let $J_{\epsilon}:=\mathcal I(3(1+\epsilon)\psi)_{z_0}=(f_1,f_2,...,f_m)_{z_0}$ and $|J_{\epsilon}|^2:=\sum_{j=1}^m|f_j|^2$. By the definition of $\mathcal I(3(1+\epsilon)\psi)_{z_0}$, we have $\int_V|J_{\epsilon}|^2e^{-3(1+\epsilon)\psi}<+\infty$, where $V\ni z_0$ is a neighborhood of $z_0$ in $D$. Then we obtain that
\begin{equation}
	\label{eq:soc1}
	\begin{split}
		&\int_Ve^{-(1+\epsilon)\psi}-e^{-\max\{(1+\epsilon)\psi,\log|J_{\epsilon}|\}}\\
		\leq&\int_{\{\log|J_{\epsilon}|>(1+\epsilon)\psi\}\cap V}e^{-(1+\epsilon)\psi}|J_{\epsilon}|^2e^{-2(1+\epsilon)\psi}\\
		<&+\infty.
	\end{split}
\end{equation}
Let $\widetilde\psi=\max\{\psi,\frac{\log|f_1|}{1+\epsilon}\}$, it is clear that
\begin{equation}
	\label{eq:soc2}
	\int_Ve^{-(1+\epsilon)\psi}-e^{-(1+\epsilon)\widetilde\psi}\leq\int_Ve^{-(1+\epsilon)\psi}-e^{-\max\{(1+\epsilon)\psi,\log|J_{\epsilon}|\}}.
\end{equation}

We may of course assume that $|f_1|<1$ on $V$, then we obtain that $\widetilde\psi<0$ on $V$. For any $s\in[0,\epsilon]$, it follows from $e^{-(1+s)\psi}-e^{-(1+s)\widetilde\psi}\leq e^{-(1+\epsilon)\psi}-e^{-(1+\epsilon)\widetilde\psi}$, inequality \eqref{eq:soc1} and inequality \eqref{eq:soc2} that
\begin{equation}
	\label{eq:soc3}
		\int_Ve^{-(1+s)\psi}-e^{-(1+s)\widetilde\psi}
		\leq\int_Ve^{-(1+\epsilon)\psi}-e^{-(1+\epsilon)\widetilde\psi}
		<+\infty.	
\end{equation}
Combining inequality \eqref{eq:soc3} and $\psi\leq\widetilde\psi$, we obtain   $\mathcal{I}((1+s)\psi)_{z_0}=\mathcal{I}((1+s)\widetilde\psi)_{z_0}$ for any $s\in[0,\epsilon]$.

Next, we will prove $\mathcal{I}(\widetilde\psi)_{z_0}=\mathcal{I}_+(\widetilde\psi)_{z_0}$ by contradiction. If not, then there exists a holomorphic function $F$ near $z_0$ such that $(F,z_0)\in\mathcal{I}(\widetilde\psi)_{z_0}$ and $(F,z_0)\notin\mathcal{I}((1+s)\widetilde\psi)_{z_0}=\mathcal{I}_+(\widetilde\psi)_{z_0}$ for some $s\in(0,\epsilon]$.

Choose a small enough neighborhood of $z_0$ denoted also by $D$,  which satisfies that $D$ is a bounded pseudoconvex domain, $D \subset V$ and$$\int_D|F|^2e^{-\widetilde\psi}<+\infty.$$ Then we claim that there exist $C_0>0$ and $T>0$ such that $C_{F,(1+s)\widetilde\psi,t}(z_0)>C_0$ for any $t>T$, which implies that $C_{F,(1+s')\widetilde\psi,t}(z_0)>C_0$ for any $t>T$ and $s'\in (0,s]$. In fact, as $C_{F,(1+s)\widetilde\psi,t}(z_0)$ is increasing with respect to $t$, if the claim is wrong, then we have $C_{F,(1+s)\widetilde\psi,t}(z_0)=0$ for any $t>0$.

As $f_1$ is a holomorphic function near $z_0=(z_{0,1},z_{0,2},...,z_{0,n})$ and $f_1$ is not identically $0$, for any $v\in\mathbb C^n$ the Taylor series of $f_1$ at $z_0$ can be written as
$$f_1(z_0+tv)=\sum_{k=0}^{+\infty}\frac{1}{k!}t^kf^{k}(v),$$
where $f^{k}$ is a homogeneous polynomial of degree $k$ on $\mathbb C^n$ and $f^{k_0}$ is not identically 0 for   some $k_0$. Thus we may assume that $f^{k_0}(v) \not=0$, where $v=(1,0,...,0)$. Then there exists $r_1$ such that $f_1(w_1,z_{0,2},z_{0,3},...,z_{0,n})\not=0$ and $(w_1,z_{0,2},z_{0,3},...,z_{0,n})\in D$ when $0<|w_1-z_{0,1}|\leq r_1$,
which follows that there exist $r>0$ and $a>0$ such that $D(z_{0,1},r_1)\times \prod_{l=2}^nD(z_{0,l},r)\subset D$ where $D(y,r):=\{z\in\mathbb C:|z-y|<r\}$ and $|f_1(w)|>a$ for any $w\in\{w_1\in\mathbb C:\frac{r_1}{2}<|w_1-z_{0,1}|<r_1\}\times \prod_{l=2}^nD(z_{0,l},r)$ denoted by $W$.
Choosing $t_0>-\log a$, we have
\begin{equation}
	\label{eq:soc4}
	W\subset\{\log |f_1|\geq -t_0\}\cap D\subset \{(1+s)\widetilde\psi\geq-t_0\}\cap D.
\end{equation}
It follows from $C_{F,(1+s)\widetilde\psi,t_0}(z_0)=0$ that for any positive integer $j$ there exists $F_j\in\mathcal O(D)$ such that $(F_j-F,z_0)\in \mathcal I((1+s)\widetilde\psi)_{z_0}$ and
\begin{equation}
	\label{eq:soc5}
	\int_W|F_j|^2\leq\int_{\{(1+s)\widetilde\psi\geq-t_0\}\cap D}|F_j|^2<\frac{1}{j}.
\end{equation}
Combining inequality \eqref{eq:soc5} and Maximum principle,
we obtain that $\{F_j\}$ compactly convergent to $0$ on  $D(z_{0,1},r_1)\times \prod_{l=2}^nD(z_{0,l},r)$.
Note that Lemma \ref{closedness} implies that $(-F,z_0)\in \mathcal I((1+s)\widetilde\psi)_{z_0}$,
which contradicts $(F,z_0)\not\in \mathcal I((1+s)\widetilde\psi)_{z_0}$.
Then the claim holds.

As $\lim_{t\rightarrow+\infty}\int_{D_t}|F|^2e^{-\widetilde\psi}=\int_{D}|F|^2e^{-\widetilde\psi}<+\infty$, let $D_{t}=\{z\in D:\widetilde\psi\geq -t\}$, then We have
\begin{equation}
	\label{eq:soc6}
	\lim_{t\rightarrow+\infty}\lim_{s'\rightarrow0+}\frac{\int_{D_t}|F|^2e^{-(1+s')\widetilde\psi}}{C_{F,(1+s')\widetilde\psi,t}(z_0)}\leq\lim_{t\rightarrow+\infty}\frac{\int_{D_t}|F|^2e^{-\widetilde\psi}}{C_0}=\frac{\int_{D}|F|^2e^{-\widetilde\psi}}{C_0}<+\infty.
\end{equation}
But using Theorem \ref{thm: support function}, we obtain that
\begin{displaymath}
	\frac{\int_{D_t}|F|^2e^{-(1+s')\widetilde\psi}}{C_{F,(1+s')\widetilde\psi,t}(z_0)}
	\geq\frac{\int_{\{(1+s')\widetilde\psi\geq-t\}\cap D}|F|^2e^{-(1+s')\widetilde\psi}}{C_{F,(1+s')\widetilde\psi,t}(z_0)}
	\geq\frac{t}{1-e^{-t}},
\end{displaymath}
which contradicts inequality \eqref{eq:soc6}.
Then Corollary \ref{soc} has been proved.

\section{Appendix}

In this section, we present a concavity property of $G_{F,\psi,c}$ and the proof of Lemma \ref{l: estimates}.

\subsection{A concavity property of $G_{F,\psi,c}$}

Let $h(t)=\int_t^{+\infty}c(l)e^{-l}dl$ defined on $t\in[0,+\infty)$.
It is clear that $h(t)\in(0,\int_0^{+\infty}c(l)e^{-l}dl]$,
and $h(t)$ is strictly decreasing with respect to $t$.

Proposition \ref{p} shows that
$$G_{F,\psi,c}(t)\geq \frac{\int_t^{+\infty}c(l)e^{-l}dl}{\int_0^{+\infty}c(l)e^{-l}dl}G_{F,\psi,c}(0), $$ when $G_{F,\psi,c}(0)<+\infty$.
Let $r=h(t)$, then we obtain that
\begin{displaymath}
	\begin{split}
		G_{F,\psi,c}(h^{-1}(r))&\geq \frac{h(h^{-1}(r))}{\int_0^{+\infty}c(l)e^{-l}dl}G_{F,\psi,c}(0)\\
		&\geq\frac{r}{\int_0^{+\infty}c(l)e^{-l}dl}G_{F,\psi,c}\left(h^{-1}\left(\int_0^{+\infty}c(l)e^{-l}dl\right)\right).	\end{split}
\end{displaymath}
So it is natural to consider the concavity of $G_{F,\psi,c}(h^{-1}(r))$.

\begin{Proposition}
\label{p2}
Let $\psi$ be a negative plurisubharmonic function defined on a pseudoconvex domain $D\subset \mathbb C^n$, $F$ be a holomorphic function on $D$, $c\in P$, and $\psi(z_0)=-\infty$. If $G_{F,\psi,c}(0)<+\infty$, then $G_{F,\psi,c}(h^{-1}(r))$ is concave with respect to $r\in(0,\int_0^{+\infty}c(l)e^{-l}dl]$.
\end{Proposition}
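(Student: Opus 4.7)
The plan is to adapt the contradiction strategy from the proof of Proposition \ref{p}, now organized around an arbitrary chord of the graph of $r \mapsto G_{F,\psi,c}(h^{-1}(r))$ in place of the chord from the origin to $(h(0), G_{F,\psi,c}(0))$.

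First, I would upgrade Lemma \ref{l:5} so that the base point $0$ is replaced by an arbitrary $t_{1} \geq 0$: for all $0 \leq t_{1} < t_{0}$,
\begin{displaymath}
G_{F,\psi,c}(t_{1}) - G_{F,\psi,c}(t_{0}) \leq \frac{\int_{t_{1}}^{t_{0}} c(l)e^{-l}\,dl}{c(t_{0})e^{-t_{0}}} \liminf_{B \to 0^{+}} \frac{G_{F,\psi,c}(t_{0}) - G_{F,\psi,c}(t_{0}+B)}{B}.
\end{displaymath}
This is obtained by applying Lemma \ref{l:5} on the (pseudoconvex) sublevel set $D' := \{\psi < -t_{1}\}$ with the negative plurisubharmonic function $\psi' := \psi + t_{1}$ and the auxiliary function $\tilde{c}(x) := c(x + t_{1})$. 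One checks directly that $\tilde{c} \in P$ (its three defining properties transfer) and that $G_{F,\psi',\tilde{c}}(t) = G_{F,\psi,c}(t + t_{1})$ (using $\mathcal{I}(\psi')_{z_{0}} = \mathcal{I}(\psi)_{z_{0}}$ and $\tilde{c}(-\psi') = c(-\psi)$). The substitution $u = l + t_{1}$ collapses the resulting $e^{\pm t_{1}}$ factors and yields the displayed inequality.

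Second, I would mimic the contradiction argument from Proposition \ref{p}, now using the chord through $(h(t_{1}), G_{F,\psi,c}(t_{1}))$ and $(h(t_{2}), G_{F,\psi,c}(t_{2}))$ in place of the chord through the origin. Fix any $0 \leq t_{1} < t_{2}$, pick $\alpha, \beta$ with $\alpha h(t_{i}) + \beta = G_{F,\psi,c}(t_{i})$ for $i=1,2$, and set $H(t) := G_{F,\psi,c}(t) - \alpha h(t) - \beta$; then $H(t_{1}) = H(t_{2}) = 0$, and $H$ is lower semi-continuous by Lemma \ref{l:4}. If $H$ were not nonnegative on $[t_{1}, t_{2}]$, it would attain a strictly negative minimum at some $t^{*} \in (t_{1}, t_{2})$, giving
\begin{displaymath}
H(t^{*}) + \frac{\int_{t_{1}}^{t^{*}} c(l)e^{-l}\,dl}{c(t^{*})e^{-t^{*}}} \liminf_{B \to 0^{+}} \frac{H(t^{*}) - H(t^{*}+B)}{B} < 0.
\end{displaymath}
Expanding $H$ and using $[h(t^{*}) - h(t^{*}+B)]/B \to c(t^{*})e^{-t^{*}}$, the left side rearranges to
\begin{displaymath}
G_{F,\psi,c}(t^{*}) - \alpha h(t_{1}) - \beta + \frac{\int_{t_{1}}^{t^{*}} c(l)e^{-l}\,dl}{c(t^{*})e^{-t^{*}}} \liminf_{B \to 0^{+}} \frac{G_{F,\psi,c}(t^{*}) - G_{F,\psi,c}(t^{*}+B)}{B},
\end{displaymath}
which the generalized Lemma \ref{l:5} from the first step bounds below by $G_{F,\psi,c}(t^{*}) - \alpha h(t_{1}) - \beta + G_{F,\psi,c}(t_{1}) - G_{F,\psi,c}(t^{*}) = H(t_{1}) = 0$, contradicting the strict negativity above.

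Consequently $G_{F,\psi,c}(t) \geq \alpha h(t) + \beta$ on $[t_{1}, t_{2}]$. In the variable $r = h(t)$, with $r_{i} = h(t_{i})$, this reads $G_{F,\psi,c}(h^{-1}(r)) \geq \alpha r + \beta$ for every $r \in [r_{2}, r_{1}]$, where $\alpha r + \beta$ is precisely the chord joining $(r_{1}, G_{F,\psi,c}(t_{1}))$ and $(r_{2}, G_{F,\psi,c}(t_{2}))$. Since $t_{1}, t_{2}$ were arbitrary, this is exactly the chord characterization of concavity of $r \mapsto G_{F,\psi,c}(h^{-1}(r))$ on $(0, \int_{0}^{+\infty} c(l)e^{-l}\,dl]$. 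I expect the main hurdle to be the first step: one must carefully verify that $\tilde{c} \in P$, that $G_{F,\psi',\tilde{c}}(t)$ coincides with $G_{F,\psi,c}(t + t_{1})$, and that all quantities in Lemma \ref{l:5} transform correctly under the shift, so that the resulting inequality collapses to exactly the base-point-$t_{1}$ variant needed above. Once this is in hand, the second step is a cosmetic variant of the argument for Proposition \ref{p}.
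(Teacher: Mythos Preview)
Your proposal is correct and matches the paper's approach almost exactly. Your first step---shifting to $D'=\{\psi<-t_1\}$, $\psi'=\psi+t_1$, $\tilde c(x)=c(x+t_1)$ to upgrade Lemma~\ref{l:5} to an arbitrary base point $t_1$---is precisely the paper's Lemma~\ref{l:21}, proved there by the same substitution. The only difference is in the second step: the paper passes from Lemma~\ref{l:21} to concavity by quoting an abstract real-analysis criterion (Lemma~\ref{l:22}: a lower semicontinuous function whose secant slopes are dominated by the one-sided lower Dini derivative at the right endpoint is concave), whereas you re-run the contradiction argument of Proposition~\ref{p} against an arbitrary chord. Your route is more self-contained but slightly longer; the paper's is shorter but relies on an external lemma. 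Substantively the two are the same.
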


We complete the proof of the Proposition \ref{p2} by the following two lemmas.

\begin{Lemma}
	\label{l:21}
	Under the assumption of Proposition \ref{p2}, for any $t_1\in[0,+\infty)$ and $t_0\in(0,+\infty)$, 	we have
	\begin{displaymath}
		\begin{split}
			G_{F,\psi,c}(t_1)-G_{F,\psi,c}(t_0+t_1)\leq &\frac{\int_{t_1}^{t_0+t_1}c(l)e^{-l}dl}{c(t_0+t_1)e^{-(t_0+t_1)}}\\
			&\times \liminf_{B\rightarrow0+}\frac{G_{F,\psi,c}(t_0+t_1)-G_{F,\psi,c}(t_0+t_1+B)}{B},		\end{split}
	\end{displaymath}
	i.e. $$\frac{G_{F,\psi,c}(t_1)-G_{F,\psi,c}(t_0+t_1)}{\int_{t_1}^{+\infty}c(l)e^{-l}dl-\int_{t_0+t_1}^{+\infty}c(l)e^{-l}dl}\leq\liminf_{B\rightarrow0+}\frac{G_{F,\psi,c}(t_0+t_1)-G_{F,\psi,c}(t_0+t_1+B)}{\int_{t_0+t_1}^{+\infty}c(l)e^{-l}dl-\int_{t_0+t_1+B}^{+\infty}c(l)e^{-l}dl}.$$
	
\end{Lemma}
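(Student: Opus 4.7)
The plan is to reduce Lemma \ref{l:21} to Lemma \ref{l:5} by a simultaneous translation of the weight and of the function $c$. Set $D':=\{\psi<-t_1\}$, which is a pseudoconvex open subset of $D$ as a sublevel set of a plurisubharmonic function on a pseudoconvex domain. On $D'$, the function $\psi':=\psi+t_1$ is negative plurisubharmonic with $\psi'(z_0)=-\infty$. Introduce $\tilde c(x):=c(x+t_1)$; I would first verify $\tilde c\in P$ by checking the three defining conditions in turn: $\int_0^{+\infty}\tilde c(x)e^{-x}dx=e^{t_1}\int_{t_1}^{+\infty}c(l)e^{-l}dl<+\infty$, the function $\tilde c(x)e^{-x}=e^{t_1}c(x+t_1)e^{-(x+t_1)}$ is decreasing by property (2) for $c$, and $\tilde c$ inherits a positive lower bound on each $(a,+\infty)$ from property (3) for $c$.

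Next I would record the two identifications that make the translation work. Since $\psi'-\psi\equiv t_1$ is a constant, $\mathcal I(\psi')_{z_0}=\mathcal I(\psi)_{z_0}$. For any $s\geq 0$, $\{\psi'<-s\}=\{\psi<-s-t_1\}$ and $\tilde c(-\psi')=c(-\psi-t_1+t_1)=c(-\psi)$. It follows directly from the definition that the quantity $G_{F,\psi',\tilde c}(s)$ formed on $D'$ coincides with $G_{F,\psi,c}(s+t_1)$.

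Now I would apply Lemma \ref{l:5} to the data $(D',\psi',\tilde c,F|_{D'},z_0)$ with parameter $t_0\in(0,+\infty)$. The hypothesis $G_{F,\psi',\tilde c}(0)=G_{F,\psi,c}(t_1)<+\infty$ follows from the standing assumption $G_{F,\psi,c}(0)<+\infty$ and the monotonicity provided by Lemma \ref{l:4}. The output of Lemma \ref{l:5}, after unwinding the identifications, reads
\begin{equation*}
G_{F,\psi,c}(t_1)-G_{F,\psi,c}(t_0+t_1)\leq\frac{\int_0^{t_0}\tilde c(t)e^{-t}dt}{\tilde c(t_0)e^{-t_0}}\liminf_{B\to 0+}\frac{G_{F,\psi,c}(t_0+t_1)-G_{F,\psi,c}(t_0+t_1+B)}{B}.
\end{equation*}
A change of variables $l=t+t_1$ in the numerator gives $\int_0^{t_0}\tilde c(t)e^{-t}dt=e^{t_1}\int_{t_1}^{t_0+t_1}c(l)e^{-l}dl$ and $\tilde c(t_0)e^{-t_0}=e^{t_1}c(t_0+t_1)e^{-(t_0+t_1)}$; the factors $e^{t_1}$ cancel, yielding the first displayed inequality of Lemma \ref{l:21}. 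The equivalent ratio formulation then follows by dividing through by $\int_{t_1}^{t_0+t_1}c(l)e^{-l}dl$ and observing that $\int_{t_0+t_1}^{t_0+t_1+B}c(l)e^{-l}dl\sim B\cdot c(t_0+t_1)e^{-(t_0+t_1)}$ as $B\to 0+$, which lets the coefficient $c(t_0+t_1)e^{-(t_0+t_1)}$ be absorbed into the $\liminf$.

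There is no genuine obstacle here: the content of Lemma \ref{l:21} is just the translation invariance of Lemma \ref{l:5} under simultaneous shifts of $\psi$ and $c$, and the only point requiring verification is that the shifted $\tilde c$ remains in class $P$.
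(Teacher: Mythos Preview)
Your proposal is correct and follows exactly the same approach as the paper: reduce to Lemma \ref{l:5} by replacing $D$ with $\{\psi<-t_1\}$, $\psi$ with $\psi+t_1$, and $c(t)$ with $c(t+t_1)$. The paper's proof is a one-line invocation of this substitution, and you have simply filled in the routine verifications (that $\tilde c\in P$, that the multiplier ideals and the functions $G$ match under the shift, and the change of variables in the resulting integrals).
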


\begin{proof}
	By replacing $D$ with $\{\psi<-t_1\}$, replacing $\psi$ with $\psi+t_1$ and replacing $c(t)$ by $c(t+t_1)$, it follows from Lemma \ref{l:5} that this lemma holds.
\end{proof}

As $G_{F,\psi,c}(h^{-1}(r))$ is lower semi-continuous (Lemma \ref{l:4}), then it follows from the following well-known property of concave functions (Lemma \ref{l:22}) that Lemma \ref{l:21} implies Proposition \ref{p2}.

\begin{Lemma}
\label{l:22} (see \cite{guan_sharp})
Let $a(r)$ be a lower semicontinuous function on $(0,R]$.
Then $a(r)$ is concave if and only if
$$\frac{a(r_{1})-a(r_{2})}{r_{1}-r_{2}}\leq \liminf_{r_{3}\to r_{2}-}\frac{a(r_{3})-a(r_{2})}{r_{3}-r_{2}}$$
holds for any $0< r_{2}<r_{1}\leq R$.
\end{Lemma}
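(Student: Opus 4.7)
The plan is to prove the two implications separately, with the nontrivial direction being the converse where the lower semicontinuity hypothesis does the essential work.

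For the ``only if'' direction, I would simply invoke the standard slope-monotonicity characterization of concave functions. For $r_3<r_2<r_1$ in $(0,R]$, concavity gives
\[
\frac{a(r_3)-a(r_2)}{r_3-r_2}=\frac{a(r_2)-a(r_3)}{r_2-r_3}\geq \frac{a(r_1)-a(r_2)}{r_1-r_2}.
\]
Taking $\liminf_{r_3\to r_2-0}$ on the left yields the stated inequality. No regularity hypothesis on $a$ is needed here.

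For the converse, I would argue by contradiction. If $a$ fails to be concave, there exist $r_0<r_1$ in $(0,R]$ together with some $r^{**}\in(r_0,r_1)$ such that $a(r^{**})<L(r^{**})$, where $L$ is the affine function interpolating $(r_0,a(r_0))$ and $(r_1,a(r_1))$. Set $b:=a-L$. Then $b(r_0)=b(r_1)=0$, $b(r^{**})<0$, and since $L$ is continuous, $b$ inherits lower semicontinuity from $a$ on the compact interval $[r_0,r_1]$. Hence $b$ attains its infimum at some $r_*\in[r_0,r_1]$, and $b(r_*)\leq b(r^{**})<0$ forces $r_*\in(r_0,r_1)$. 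Because subtracting an affine function merely shifts every difference quotient of $a$ by the constant slope of $L$, the hypothesized inequality transfers verbatim to $b$; applying it with $(r_2,r_1)$ replaced by $(r_*,r_1)$, the left-hand side equals $\frac{-b(r_*)}{r_1-r_*}>0$, while for every $r_3\in(r_0,r_*)$ the minimality of $b(r_*)$ gives $b(r_3)-b(r_*)\geq0$ and $r_3-r_*<0$, whence $\frac{b(r_3)-b(r_*)}{r_3-r_*}\leq 0$ and consequently the $\liminf$ on the right-hand side is nonpositive. This sign discrepancy contradicts the hypothesis.

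The main (mild) obstacle is the reduction step: one must pick the auxiliary function $b=a-L$ so that lower semicontinuity produces an interior minimizer and so that the hypothesis still applies to $b$. The translation-invariance of the inequality under subtraction of an affine function is what makes this reduction legitimate, and once an interior minimizer is in hand the one-sided slope comparison forces a contradiction immediately.
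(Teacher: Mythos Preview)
The paper does not actually prove this lemma; it is stated with the citation ``(see \cite{guan_sharp})'' and used immediately to deduce Proposition~\ref{p2} from Lemma~\ref{l:21}, with no argument given. There is therefore no paper proof to compare against. Your argument is correct: the forward direction is the standard slope-monotonicity of concave functions, and for the converse your reduction---subtract the secant $L$, use lower semicontinuity on the compact interval $[r_0,r_1]$ to locate an interior minimizer $r_*$ of $b=a-L$, then observe that the hypothesis (which is invariant under subtracting an affine function) forces the positive quantity $\frac{-b(r_*)}{r_1-r_*}$ to be bounded above by a nonpositive $\liminf$---is clean and complete.
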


\subsection{Proof of Lemma \ref{l: estimates}}
\label{sec:Lemma_L2}

The following remark shows that it suffices to consider the case of Lemma \ref{l: estimates} that $D$ is a strongly pseudoconvex domain, $\varphi$ and $\psi$ are plurisubharmonic functions on an open set $U$ containing $\bar{D}$ such that $\psi<0$ on $U$, and $F$ is a holomorphic function on $U\cap \{\psi<-t_0\}$ such that $\int_{D\cap \{\psi<-t_0\}}|F|^2<+\infty$.

In the following remark, we recall some standard steps (see e.g. \cite{siu96,guan-zhou13p,guan-zhou13ap}) to illustrate the above statement.
\begin{Remark}
\label{r4}
	It is well-known that there exist strongly pseudoconvex domains $D_{1}\Subset\cdots\Subset D_{j}\Subset D_{j+1}\Subset\cdots$ such that $\cup_{j=1}^{+\infty}D_j=D$.
	
	If inequality \eqref{eq:l1} holds on $D$, then we obtain a sequence of holomorphic functions $\widetilde F_{j}$ on $D_j$ such that
	\begin{equation}
		\label{ep:r41}
		\begin{split}
		&\int_{D_{j}}|\widetilde{F}_{j}-(1-b_{t_0,B}(\psi))F|^{2}e^{-\varphi+v_{t_0,B}(\psi)}c(-v_{t_0,B}(\psi))\\\leq
		&\int_{0}^{t_{0}+B}c(t)e^{-t}dt\int_{D_{j}}\frac{1}{B}\mathbb{I}_{\{-t_{0}-B<\psi<-t_{0}\}}|F|^{2}e^{-\varphi}\\\leq
		&C\int_{0}^{t_{0}+B}c(t)e^{-t}dt		
		\end{split}		
	\end{equation}
is bounded with respect to $j$. Note that for any given $j$, $e^{-\varphi+v_{t_0,B}(\psi)}c(-v_{t_0,B}(\psi))$ has a positive lower bound on $D_j$, then it follows that for any given $j$, $\int_{D_{j}}|\widetilde{F}_{j'}-(1-b_{t_0,B}(\psi))F|^{2}$ is bounded with respect to $j'>j$. As $$\int_{D_{j}}|(1-b_{t_0,B}(\psi))F|^{2}<\int_{D_j\cap\{\psi<-t_0\}}|F|^2<+\infty,$$ then we obtain that $\int_{D_j}|\widetilde F_{j'}|^2$ is bounded with respect to $j'>j$.

By diagonal method, there exists a sequence of  $\widetilde F_{j_k}$ uniformly convergent on any $\bar D_{j}$ 	to a holomorphic function on $D$ denoted by $\widetilde F$. Then it follows from inequality \eqref{ep:r41} and Fatou's Lemma that 	$$\int_{D_{j}}|\widetilde{F}-(1-b_{t_0,B}(\psi))F|^{2}e^{-\varphi+v_{t_0,B}(\psi)}c(-v_{t_0,B}(\psi))\leq C\int_{0}^{t_{0}+B}c(t)e^{-t}dt,$$
	then when $j$ goes to $+\infty$ we obtain Lemma \ref{l: estimates}. 	
	\end{Remark}

For the sake of completeness, we recall some lemmas on $L^2$ estimates for $\bar\partial$-equations, and $\bar\partial^*$ means the Hilbert adjoint operator of $\bar\partial$.

\begin{Lemma}
	\label{l23}
	(see \cite{siu96}, see also \cite{berndtsson})
	Let $\Omega\Subset\mathbb C^n$ be a domain with $C^{\infty}$ boundary $b\Omega$, $\Phi\in C^{\infty}(\bar\Omega)$. Let $\rho$ be a  $C^{\infty}$ defining  function for $\Omega$ such that $|d\rho|=1$ on $b\Omega$. Let $\eta$ be a smooth function on $\bar\Omega$. For any $(0,1)$-form $\alpha=\sum_{j=1}^{n}\alpha_{\bar j}d\bar z^{j}\in Dom_{\Omega}(\bar\partial^*)\cap C_{(0,1)}^{\infty}(\bar\Omega)$,
	\begin{equation}
		\label{eq:l231}
		\begin{split}
			&\int_{\Omega}\eta|\bar\partial_{\Phi}^*\alpha|^2e^{-\Phi}+\int_{\Omega}\eta|\bar\partial\alpha|^2e^{-\Phi}=\sum_{i,j=1}^n\int_{\Omega}\eta|\bar\partial_{i}\alpha_{\bar j}|^2e^{-\Phi}\\
			&+\sum_{i,j=1}^n\int_{b\Omega}\eta(\partial_i\bar\partial_j\rho)\alpha_{\bar i}\bar\alpha_{\bar j}e^{-\Phi}+\sum_{i,j=1}^n\int_{\Omega}\eta(\partial_i\bar\partial_j\Phi)\alpha_{\bar i}\bar\alpha_{\bar j}e^{-\Phi}\\
			&+\sum_{i,j=1}^n\int_{\Omega}-(\partial_i\bar\partial_j\eta)\alpha_{\bar i}\bar\alpha_{\bar j}e^{-\Phi}+2\mathrm{Re}(\bar\partial_{\Phi}^*\alpha,\alpha\llcorner(\bar\partial\eta)^{\sharp})_{\Omega,\Phi},
							\end{split}
	\end{equation}
	where $\alpha\llcorner(\bar\partial\eta)^{\sharp}=\sum_j\alpha_{\bar j}\partial_j\eta$.
	
		\end{Lemma}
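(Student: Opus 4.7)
The plan is to derive this twisted Bochner--Kodaira--Morrey--Kohn identity by expanding $\int_\Omega\eta|\bar\partial^*_\Phi\alpha|^2 e^{-\Phi}+\int_\Omega\eta|\bar\partial\alpha|^2 e^{-\Phi}$ in local coordinates and then integrating by parts to exchange $\partial_j$ and $\bar\partial_i$ derivatives. The $\eta$-independent part is the classical Morrey--Kohn--H\"ormander formula for $(0,1)$-forms in a weighted $L^2$ space; the twist by $\eta$ contributes only through Leibniz expansions whose terms, once symmetrized in the indices, assemble into the last two summands of \eqref{eq:l231}.

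Concretely, I would start from the coordinate expressions $\bar\partial^*_\Phi\alpha = -\sum_j e^{\Phi}\partial_j(e^{-\Phi}\alpha_{\bar j}) = -\sum_j(\partial_j\alpha_{\bar j}-(\partial_j\Phi)\alpha_{\bar j})$ and $|\bar\partial\alpha|^2 = \sum_{i,j}|\bar\partial_i\alpha_{\bar j}|^2 - \sum_{i,j}\bar\partial_i\alpha_{\bar j}\,\overline{\bar\partial_j\alpha_{\bar i}}$. The core step is then to rewrite $\int_\Omega\eta\,\bar\partial_i\alpha_{\bar j}\overline{\bar\partial_j\alpha_{\bar i}}\,e^{-\Phi}$ by integrating by parts in $\bar z^i$ and then in $z^j$. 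This manoeuvre produces four families of new terms: (i) a Hessian-of-$\Phi$ contribution $\eta(\partial_i\bar\partial_j\Phi)\alpha_{\bar i}\overline{\alpha_{\bar j}}e^{-\Phi}$ from differentiating $e^{-\Phi}$; (ii) gradient- and Hessian-of-$\eta$ contributions that, after regrouping, give $-(\partial_i\bar\partial_j\eta)\alpha_{\bar i}\overline{\alpha_{\bar j}}e^{-\Phi}$ together with $2\,\mathrm{Re}(\bar\partial^*_\Phi\alpha,\alpha\llcorner(\bar\partial\eta)^{\sharp})_{\Omega,\Phi}$; (iii) an interior term $\eta(\partial_j\alpha_{\bar j})\overline{(\partial_i\alpha_{\bar i})}e^{-\Phi}$ which, when combined with the expansion of $|\bar\partial^*_\Phi\alpha|^2$, leaves exactly the pure quadratic $\sum_{i,j}\eta|\bar\partial_i\alpha_{\bar j}|^2$ on the right-hand side; and (iv) a boundary integral over $b\Omega$.

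For (iv), the hypothesis $\alpha\in\mathrm{Dom}_\Omega(\bar\partial^*)\cap C^{\infty}_{(0,1)}(\bar\Omega)$ forces $\sum_j\alpha_{\bar j}\partial_j\rho=0$ on $b\Omega$. Tangentially differentiating this identity along $b\Omega$ converts the two raw boundary integrals coming from the two integrations by parts into the single Levi-form expression $\eta(\partial_i\bar\partial_j\rho)\alpha_{\bar i}\overline{\alpha_{\bar j}}$, with the normalization $|d\rho|=1$ fixing the induced boundary measure. Summing contributions (i)--(iv) then produces \eqref{eq:l231}. The main difficulty is bookkeeping: the signs in the definition of $\bar\partial^*_\Phi$, the antisymmetrization of $|\bar\partial\alpha|^2$, and, most delicately, the conversion of two independent boundary terms into one Levi-form term via the tangential derivative of the boundary condition. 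Once these are handled consistently, the twist by $\eta$ enters only through a symmetric regrouping of its derivatives, and the identity follows; the result is essentially the standard calculation recorded in \cite{siu96} and \cite{berndtsson}.
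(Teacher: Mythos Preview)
The paper does not supply its own proof of this lemma; it is stated with attribution to \cite{siu96} and \cite{berndtsson} and then used as a black box in the proof of Lemma~\ref{l: estimates}. So there is no in-paper argument to compare against.

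Your sketch is the standard derivation one finds in those references: write $\bar\partial^{*}_{\Phi}\alpha=-\sum_{j}e^{\Phi}\partial_{j}(e^{-\Phi}\alpha_{\bar j})$, expand $|\bar\partial\alpha|^{2}$ as $\sum_{i,j}|\bar\partial_{i}\alpha_{\bar j}|^{2}-\sum_{i,j}\bar\partial_{i}\alpha_{\bar j}\,\overline{\bar\partial_{j}\alpha_{\bar i}}$, integrate the cross term by parts twice, and use the $\bar\partial$-Neumann boundary condition $\sum_{j}\alpha_{\bar j}\partial_{j}\rho=0$ on $b\Omega$ (differentiated tangentially) to convert the boundary contributions into the Levi-form term. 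The extra factor $\eta$ is carried along via Leibniz and produces exactly the Hessian-of-$\eta$ term and the cross term $2\mathrm{Re}(\bar\partial^{*}_{\Phi}\alpha,\alpha\llcorner(\bar\partial\eta)^{\sharp})_{\Omega,\Phi}$. This is correct in outline and matches the approach in the cited sources; the only caveat is that what you have written is a roadmap rather than a proof, so if a full argument is required you would still need to execute the bookkeeping (in particular the tangential differentiation of the boundary condition and the symmetrization of the $\eta$-gradient terms) line by line.
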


The symbols and notations can be referred to \cite{guan-zhou13ap}. See also \cite{siu96}, \cite{siu00},or \cite{Straube}.

\begin{Lemma}
	\label{l24}
	(see \cite{berndtsson}, see also \cite{guan-zhou13ap}) Let $\Omega\Subset\mathbb C^n$ be a strictly pseudoconvex domain with $C^{\infty}$ boundary $b\Omega$ and $\Phi\in C(\bar\Omega)$. Let $\lambda$ be a $\bar\partial$ closed smooth form of bidegree $(n,1)$ on $\bar\Omega$. Assume the inequality
	$$|(\lambda,\alpha)_{\Omega,\Phi}|^2\leq C\int_{\Omega}|\bar\partial_{\Phi}^*\alpha|^2\frac{e^{-\Phi}}{\mu}<+\infty,$$
	where $\frac{1}{\mu}$ is an integrable positive function on $\Omega$ and $C$ is a constant, holds for all $(n,1)$-form $\alpha\in Dom_{\Omega}(\bar\partial^*)\cap Ker(\bar\partial)\cap C_{(n,1)}^{\infty}(\bar\Omega)$. Then there is a solution $u$ to the equation $\bar\partial u=\lambda$ such that
	$$\int_{\Omega}|u|^2\mu e^{-\Phi}\leq C.$$
\end{Lemma}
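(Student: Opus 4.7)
The plan is to deduce the existence of $u$ by the classical Hahn--Banach / Riesz duality scheme, working in the weighted Hilbert spaces $H_{1}=L^{2}_{(n,0)}(\Omega,\mu e^{-\Phi})$ and $H_{2}=L^{2}_{(n,1)}(\Omega,e^{-\Phi})$. Viewing $T=\bar\partial$ as a densely defined closed operator $H_{1}\to H_{2}$, a routine computation transferring the factor $\mu$ across the adjoint gives $T^{*}\alpha=\bar\partial^{*}_{\Phi}\alpha/\mu$ and hence
\[
\|T^{*}\alpha\|_{H_{1}}^{2}=\int_{\Omega}\frac{|\bar\partial^{*}_{\Phi}\alpha|^{2}}{\mu}e^{-\Phi},
\]
so that the hypothesis rewrites as $|(\alpha,\lambda)_{H_{2}}|\leq C^{1/2}\|T^{*}\alpha\|_{H_{1}}$ for every $\alpha\in\mathrm{Dom}(T^{*})\cap\ker(\bar\partial)\cap C^{\infty}_{(n,1)}(\bar\Omega)$.

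The next step is to upgrade this inequality to all of $\mathrm{Dom}(T^{*})$. First, by standard Friedrichs--H\"ormander smoothing on the strongly pseudoconvex $\Omega$, smooth $\bar\partial$-closed forms are dense in $\mathrm{Dom}(T^{*})\cap\ker(\bar\partial)$ in the graph norm, so the estimate persists for every $\bar\partial$-closed $\alpha\in\mathrm{Dom}(T^{*})$. Second, for arbitrary $\alpha$ I split $\alpha=\alpha_{1}+\alpha_{2}$ with $\alpha_{1}\in\ker(\bar\partial)$ and $\alpha_{2}\in\ker(\bar\partial)^{\perp}=\overline{R(\bar\partial^{*}_{(n,2)\to(n,1)})}$. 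From $\bar\partial^{2}=0$ one obtains $R(\bar\partial^{*}_{(n,2)\to(n,1)})\subset\ker T^{*}$, hence $T^{*}\alpha=T^{*}\alpha_{1}$; from $\lambda\in\ker(\bar\partial)$ one has $\lambda\perp\overline{R(\bar\partial^{*}_{(n,2)\to(n,1)})}$, hence $(\alpha,\lambda)_{H_{2}}=(\alpha_{1},\lambda)_{H_{2}}$. Thus neither side of the estimate sees $\alpha_{2}$, and the inequality carries over to all of $\mathrm{Dom}(T^{*})$.

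With the extended bound in hand, I define a linear functional $\ell$ on $R(T^{*})\subset H_{1}$ by $\ell(T^{*}\alpha):=(\alpha,\lambda)_{H_{2}}$. The inequality simultaneously forces well-definedness (if $T^{*}\alpha=0$ then $(\alpha,\lambda)=0$) and yields the norm bound $\|\ell\|\leq C^{1/2}$. Hahn--Banach extends $\ell$ to all of $H_{1}$ without inflating the norm, and the Riesz representation theorem produces $u\in H_{1}$ with $\|u\|_{H_{1}}^{2}\leq C$, i.e.\ $\int_{\Omega}|u|^{2}\mu e^{-\Phi}\leq C$, such that $(T^{*}\alpha,u)_{H_{1}}=(\alpha,\lambda)_{H_{2}}$ for every $\alpha\in\mathrm{Dom}(T^{*})$. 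Rewriting the left side as $(\alpha,Tu)_{H_{2}}$ and using density of $\mathrm{Dom}(T^{*})$ in $H_{2}$ then yields $\bar\partial u=\lambda$.

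The principal obstacle is the density/decomposition step that pushes the hypothesis from smooth $\bar\partial$-closed test forms onto all of $\mathrm{Dom}(T^{*})$: the smoothing must simultaneously respect the boundary condition encoded in $\mathrm{Dom}(T^{*})$ and the constraint $\bar\partial\alpha=0$, and this is where the $C^{\infty}$ boundary and the strong pseudoconvexity of $\Omega$ are genuinely used. Everything else -- identifying the twisted adjoint, verifying that the harmonic part contributes nothing on either side, and running Hahn--Banach/Riesz -- is formal once that extension is secured.
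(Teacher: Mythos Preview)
The paper does not supply its own proof of this lemma; it is quoted with references to Berndtsson and Guan--Zhou and then used as a black box in the proof of Lemma~\ref{l: estimates}. Your sketch is precisely the standard Hahn--Banach/Riesz duality argument that underlies such $\bar\partial$-existence results, and the outline is correct: the identification of the twisted adjoint and its $H_{1}$-norm, the orthogonal decomposition showing that the component of $\alpha$ orthogonal to $\ker\bar\partial$ contributes to neither side of the estimate, and the passage from the functional bound to a solution via Riesz representation are all sound. You also correctly isolate the Friedrichs--H\"ormander density step as the one place where the smooth-boundary and strong pseudoconvexity hypotheses are genuinely used. Since the paper offers no competing argument, there is nothing further to compare; your proposal simply fills in what the cited references supply.
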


In the following part,  we will prove Lemma \ref{l: estimates}.

For the sake of completeness, let us recall some steps in the proof in \cite{guan-zhou13p} (see also \cite{guan-zhou13ap}, \cite{GZopen-effect}, \cite{guan_sharp}) with some slight modifications.

By Remark \ref{r4}, we can assume that $D\Subset\mathbb C^n$ is a strongly pseudoconvex domain (with smooth boundary), $\varphi$ and $\psi$ are plurisubharmonic functions on an open set $U$ containing $\bar{D}$ such that $\psi<0$ on $U$, and $F$ is a holomorphic function on $U\cap \{\psi<-t_0\}$ such that
\begin{equation}
	\label{eq:14}
	\int_{D\cap \{\psi<-t_0\}}|F|^2<+\infty.
\end{equation}
Then it follows from method of convolution (see e.g. \cite{demailly-book}) that there exist smooth plurisubharmonic functions $\psi_m$ and $\varphi_m$ on an open set $U\supset \bar D$  decreasing convergent to $\psi$ and $\varphi$ respectively, such that $\sup_m\sup_D\psi_m<0$ and $\sup_m\sup_D\varphi_m<+\infty$.

  \

\emph{Step 1: recall some Notations}

\

Let $\epsilon\in(0,\frac{1}{8}B)$.
Let $\{v_{\epsilon}\}_{\epsilon\in(0,\frac{1}{8}B)}$ be a family of smooth increasing convex functions on $\mathbb{R}$,
which are continuous functions on $\mathbb{R}\cup\{-\infty\}$, such that:

 $1)$ $v_{\epsilon}(t)=t$ for $t\geq-t_{0}-\epsilon$, $v_{\epsilon}(t)=constant$ for $t<-t_{0}-B+\epsilon$ and are pointwise convergent to $v_{t_0,B}$, when $\epsilon\to 0$;

 $2)$ $v''_{\epsilon}(t)$ are pointwise convergent to $\frac{1}{B}\mathbb{I}_{(-t_{0}-B,-t_{0})}$, when $\epsilon\to 0$,
 and $0\leq v''_{\epsilon}(t)\leq \frac{2}{B}\mathbb{I}_{(-t_{0}-B+\epsilon,-t_{0}-\epsilon)}$ for any $t\in \mathbb{R}$;

 $3)$ $v'_{\epsilon}(t)$ are pointwise convergent to $b_{t_0,B}(t)$ which is a continuous function on $\mathbb{R}\cup\{-\infty\}$, when $\epsilon\to 0$, and $0\leq v'_{\epsilon}(t)\leq1$ for any $t\in \mathbb{R}$.

One can construct the family $\{v_{\epsilon}\}_{\epsilon\in(0,\frac{1}{8}B)}$ by the setting
\begin{equation}
\label{eq:15}
v_{\epsilon}(t):=\int_{0}^{t}\left(\int_{-\infty}^{t_{1}}\left(\frac{1}{B-4\epsilon}
\mathbb{I}_{(-t_{0}-B+2\epsilon,-t_{0}-2\epsilon)}*\rho_{\frac{1}{4}\epsilon}\right)(s)ds\right)dt_{1},
\end{equation}
where $\rho_{\frac{1}{4}\epsilon}$ is the kernel of convolution satisfying $\mathrm{supp}(\rho_{\frac{1}{4}\epsilon})\subset (-\frac{1}{4}\epsilon,\frac{1}{4}\epsilon)$.
Then it follows that
$$v''_{\epsilon}(t)=\frac{1}{B-4\epsilon}\mathbb{I}_{(-t_{0}-B+2\epsilon,-t_{0}-2\epsilon)}*\rho_{\frac{1}{4}\epsilon}(t),$$
and
$$v'_{\epsilon}(t)=\int_{-\infty}^{t}\left(\frac{1}{B-4\epsilon}\mathbb{I}_{(-t_{0}-B+2\epsilon,-t_{0}-2\epsilon)}
*\rho_{\frac{1}{4}\epsilon}\right)(s)ds.$$
It is clear that $\lim_{\epsilon\to 0}v_{\epsilon}(t)=v_{t_0,B}(t)$, and $\lim_{\epsilon\to 0}v'_{\epsilon}(t)=b_{t_0,B}(t)$.

Let $\eta=s(-v_{\epsilon}(\psi_{m}))$ and $\phi=u(-v_{\epsilon}(\psi_{m}))$,
where $s\in C^{\infty}((0,+\infty))$ satisfies $s\geq0$ and $s'>0$, and
$u\in C^{\infty}((0,+\infty))$, satisfies $\lim_{t\to+\infty}u(t)$ exists, such that $u''s-s''>0$, and $s'-u's=1$.
It follows from $\sup_{m}\sup_{D}\psi_{m}<0$ that $\phi=u(-v_{\epsilon}(\psi_{m}))$ are uniformly bounded
on $D$ with respect to $m$ and $\epsilon$,
and $u(-v_{\epsilon}(\psi))$ are uniformly bounded
on $D$ with respect to $\epsilon$.
Let $\Phi=\phi+\varphi_{m'}$.

\

\emph{Step 2: Solving $\bar\partial$-equation with smooth polar function and smooth weight}

\

Now let $\alpha=\sum_{j=1}^n\alpha_{\bar j}d\bar z^j\in$Dom$_{D}(\bar\partial^*)\cap $Ker$(\bar\partial)\cap C_{(0,1)}^{\infty}(\bar D)$. It follows from Cauchy-Schwarz inequality that
\begin{equation}
\label{eq:17}
	\begin{split}
		2\mathrm{Re}(\bar\partial_{\Phi}^*\alpha,\alpha\llcorner(\bar\partial\eta)^{\sharp})_{D,\Phi}\geq &-\int_{D}g^{-1}|\bar\partial_{\Phi}^*\alpha|^2e^{-\Phi}\\&+\sum_{j,k=1}^n\int_{D}-g(\partial_j\eta)(\bar\partial_k\eta)\alpha_{\bar j}\bar\alpha_{\bar k}e^{-\Phi},
	\end{split}
\end{equation}
 where $g$ is a positive continuous function on $\bar D$.

Using Lemma \ref{l23} and inequality \eqref{eq:17}, since $s\geq0$, $\varphi_{m'}$ is a plurisubharmonic function on $\bar D$ and $D$ is a strongly pseudoconvex domain, we get
\begin{equation}
	\label{eq:18}
	\begin{split}
		&\int_{D}(\eta+g^{-1})|\bar\partial_{\Phi}^*\alpha|^2e^{-\Phi}\\
		\geq &\sum_{j,k=1}^n\int_D(-\partial_j\bar\partial_k\eta+\eta\partial_j\bar\partial_k\Phi-g(\partial_j\eta)(\bar\partial_k\eta))\alpha_{\bar j}\bar\alpha_{\bar k}e^{-\Phi}\\
		\geq &\sum_{j,k=1}^n\int_D(-\partial_j\bar\partial_k\eta+\eta\partial_j\bar\partial_k\phi-g(\partial_j\eta)(\bar\partial_k\eta))\alpha_{\bar j}\bar\alpha_{\bar k}e^{-\Phi}.	\end{split}
\end{equation}
We need some calculations to determine $g$.

We have
\begin{equation}
	\label{eq:19}
	\partial_j\bar\partial_k\eta=-s'(-v_{\epsilon}(\psi_m))\partial_j\bar\partial_k(v_{\epsilon}(\psi_m))+s''(-v_{\epsilon}(\psi_m))\partial_jv_{\epsilon}(\psi_m)\bar\partial_kv_{\epsilon}(\psi_m),
	\end{equation}
and
\begin{equation}
	\label{eq:20}
	\partial_j\bar\partial_k\phi=-u'(-v_{\epsilon}(\psi_m))\partial_j\bar\partial_k(v_{\epsilon}(\psi_m))+u''(-v_{\epsilon}(\psi_m))\partial_jv_{\epsilon}(\psi_m)\bar\partial_kv_{\epsilon}(\psi_m)
\end{equation}
for any $j,k(1\leq j,k\leq n)$.

Then we have
\begin{equation}
\label{eq:21}
\begin{split}
&\sum_{j,k=1}^n(-\partial_j\bar{\partial}_k\eta+\eta\partial_j\bar{\partial}_k\phi-g(\partial_j\eta)(\bar\partial_k\eta))\alpha_{\bar j}\bar\alpha_{\bar k}
\\=&(s'-su')\sum_{j,k=1}^n\partial_j\bar{\partial_k}v_{\epsilon}(\psi_{m})\alpha_{\bar j}\bar\alpha_{\bar k}\\
&+((u''s-s'')-gs'^{2})\sum_{j,k=1}^n\partial_j
(-v_{\epsilon}(\psi_{m}))\bar{\partial}_k(-v_{\epsilon}(\psi_{m}))\alpha_{\bar j}\bar\alpha_{\bar k}
\\=&(s'-su')\sum_{j,k=1}^n(v'_{\epsilon}(\psi_{m})\partial_j\bar{\partial}_k\psi_{m}+v''_{\epsilon}(\psi_{m})
\partial_j(\psi_{m})\bar{\partial}_k(\psi_{m}))\alpha_{\bar j}\bar\alpha_{\bar k}
\\&+((u''s-s'')-gs'^{2})\sum_{j,k=1}^n\partial_j
(-v_{\epsilon}(\psi_{m}))\bar{\partial}_k(-v_{\epsilon}(\psi_{m}))\alpha_{\bar j}\bar\alpha_{\bar k}
.
\end{split}
\end{equation}
We omit composite item $-v_{\epsilon}(\psi_{m})$ after $s'-su'$ and $(u''s-s'')-gs'^{2}$ in the above equalities.

Let $g=\frac{u''s-s''}{s'^2}(-v_{\epsilon}(\psi_m))$. It follows that $\eta+g^{-1}=\left(s+\frac{s'^2}{u''s-s''}\right)(-v_{\epsilon}(\psi_m))$.

As $v'_{\epsilon}\geq0$ and $s'-su'=1$, using inequality \eqref{eq:18}, we obtain that
\begin{equation}
	\label{eq:22}
	\int_{D}(\eta+g^{-1})|\bar\partial_{\Phi}^*\alpha|^2e^{-\Phi}\geq\int_D v''_{\epsilon}(\psi_m)|\alpha\llcorner(\bar\partial\psi_m)^{\sharp}|^2e^{-\Phi}.
\end{equation}

As $F$ is holomorphic on $\{\psi<-t_0\}$ and $\mathrm{supp}(v''_{\epsilon}(\psi_m))\subset\{\psi<-t_0\}$, then $\lambda:=\bar\partial((1-v'_{\epsilon}(\psi_m))F)$ is well-defined and smooth on $D$. By the definition of contraction, Cauchy-Schwarz inequality and inequality \eqref{eq:22}, it follows that
\begin{equation}
	\label{eq:23}
	\begin{split}
		|(\lambda,\alpha)_{D,\Phi}|^2=&|(v''_{\epsilon}(\psi_m)\bar\partial\psi_mF,\alpha)_{D,\Phi}|^2\\
		=&|(v''_{\epsilon}(\psi_m)F,\alpha\llcorner(\bar\partial\psi_m)^{\sharp})_{D,\Phi}|^2\\
		\leq&\left(\int_Dv''_{\epsilon}(\psi_m)|F|^2e^{-\Phi}\right)\left(\int_Dv''_{\epsilon}(\psi_m)|\alpha\llcorner(\bar\partial\psi_m)^{\sharp}|^2e^{-\Phi}\right)\\
		\leq&\left(\int_Dv''_{\epsilon}(\psi_m)|F|^2e^{-\Phi}\right)\left(\int_D(\eta+g^{-1})|\bar\partial_{\Phi}^*\alpha|^2e^{-\Phi}\right).
	\end{split}
\end{equation}

Let $\mu:=(\eta+g^{-1})^{-1}$. By Lemma \ref{l24}, then we have locally $L^1$ function $u_{m,m',\epsilon}$ on $D$ such that $\bar\partial u_{m,m',\epsilon}=\lambda$, and
\begin{equation}
	\label{eq:24}
		\int_{D}|u_{m,m',\epsilon}|^2(\eta+g^{-1})^{-1}e^{-\Phi}\leq\int_Dv''_{\epsilon}(\psi_m)|F|^2e^{-\Phi}.
\end{equation}

Assume that we can choose $\eta$ and $\phi$ such that $e^{v_{\epsilon}(\psi_{m})}e^{\phi}c(-v_{\epsilon}(\psi_{m}))=(\eta+g^{-1})^{-1}$.
Then inequality \eqref{eq:24} becomes
\begin{equation}
 \label{eq:25}
 \begin{split}
 &\int_{D}|u_{m,m',\epsilon}|^{2}e^{v_{\epsilon}(\psi_{m})-\varphi_{m'}}c(-v_{\epsilon}(\psi_{m}))
  \leq\int_{D}v''_{\epsilon}(\psi_{m})| F|^2e^{-\phi-\varphi_{m'}}.
  \end{split}
\end{equation}

Let $F_{m,m',\epsilon}:=-u_{m,m',\epsilon}+(1-v'_{\epsilon}(\psi_{m})){F}$. It is clear that $F_{m,m',\epsilon}$ is holomorphic on $D$.
Then inequality \eqref{eq:25} becomes
\begin{equation}
 \label{eq:26}
 \begin{split}
 &\int_{D}|F_{m,m',\epsilon}-(1-v'_{\epsilon}(\psi_{m})){F}|^{2}e^{v_{\epsilon}(\psi_{m})-\varphi_{m'}}c(-v_{\epsilon}(\psi_{m}))
  \\&\leq\int_{D}(v''_{\epsilon}(\psi_{m}))| F|^2e^{-\phi-\varphi_{m'}}.
  \end{split}
\end{equation}

\

\emph{Step 3: Singular polar function and smooth weight}

\

As $\sup_{m,\epsilon}|\phi|=\sup_{m,\epsilon}|u(-v_{\epsilon}(\psi_{m}))|<+\infty$ and $\varphi_{m'}$ is continuous on $\bar{D}$,
then $\sup_{m,\epsilon}e^{-\phi-\varphi_{m'}}<+\infty$.
Note that
$$v''_{\epsilon}(\psi_{m})| F|^2e^{-\phi-\varphi_{m'}}\leq\frac{2}{B}\mathbb{I}_{\{\psi<-t_{0}\}}| F|^{2}\sup_{m,\epsilon}e^{-\phi-\varphi_{m'}}$$
on $D$,
then it follows from inequality \eqref{eq:14} and the dominated convergence theorem that
\begin{equation}
\label{eq:27}
 \lim_{m\to+\infty}\int_{D}v''_{\epsilon}(\psi_{m})| F|^2e^{-\phi-\varphi_{m'}}=
\int_{D}v''_{\epsilon}(\psi)| F|^2e^{-u(-v_{\epsilon}(\psi))-\varphi_{m'}}.
\end{equation}

Note that $\inf_{m}\inf_{D}e^{v_{\epsilon}(\psi_{m})-\varphi_{m'}}c(-v_{\epsilon}(\psi_{m}))>0$,
then it follows from inequality \eqref{eq:26}
that $\sup_{m}\int_{D}|F_{m,m',\epsilon}-(1-v'_{\epsilon}(\psi_{m})){F}|^{2}<+\infty$.
Note that
\begin{equation}
\label{eq:28}
|(1-v'_{\epsilon}(\psi_{m}))F|\leq |\mathbb{I}_{\{\psi<-t_{0}\}}F|,
\end{equation}
then it follows from inequality \eqref{eq:14}
that $\sup_{m}\int_{D}|F_{m,m',\epsilon}|^{2}<+\infty$,
which implies that there exists a subsequence of $\{F_{m,m',\epsilon}\}_{m\in \mathbb{N}}$
(also denoted by $F_{m,m',\epsilon}$) compactly convergent to a holomorphic function $F_{m',\epsilon}$ on $D$.

Note that $e^{v_{\epsilon}(\psi_{m})-\varphi_{m'}}c(-v_{\epsilon}(\psi_{m}))$ are uniformly bounded on $D$ with respect to $m$,
then it follows from
$|F_{m,m',\epsilon}-(1-v'_{\epsilon}(\psi_{m})){F}|^{2}\leq
 2(|F_{m,m',\epsilon}|^{2}+ |(1-v'_{\epsilon}(\psi_{m})){F}|^{2})
\leq  2(|F_{m,m',\epsilon}|^{2}+  |\mathbb{I}_{\{\psi<-t_{0}\}}F^{2}|)$
and the dominated convergence theorem that
\begin{equation}
 \label{eq:29}
 \begin{split}
 \lim_{m\to+\infty}&\int_{K}|F_{m,m',\epsilon}-(1-v'_{\epsilon}(\psi_{m})){F}|^{2}e^{v_{\epsilon}(\psi_{m})-\varphi_{m'}}c(-v_{\epsilon}(\psi_{m}))
  \\=&\int_{K}|F_{m',\epsilon}-(1-v'_{\epsilon}(\psi)){F}|^{2}e^{v_{\epsilon}(\psi)-\varphi_{m'}}c(-v_{\epsilon}(\psi))
  \end{split}
\end{equation}
holds for any compact subset $K$ on $D$.
Combining with inequality \eqref{eq:26}, equality \eqref{eq:27} and equality \eqref{eq:29},
one can obtain that
\begin{equation}
 \label{eq:30}
 \begin{split}
&\int_{K}|F_{m',\epsilon}-(1-v'_{\epsilon}(\psi)){F}|^{2}e^{v_{\epsilon}(\psi)-\varphi_{m'}}c(-v_{\epsilon}(\psi))
\\&\leq
\int_{D}v''_{\epsilon}(\psi)| F|^2e^{-u(-v_{\epsilon}(\psi))-\varphi_{m'}},
\end{split}
\end{equation}
which implies
\begin{equation}
 \label{eq:31}
 \begin{split}
&\int_{D}|F_{m',\epsilon}-(1-v'_{\epsilon}(\psi)){F}|^{2}e^{v_{\epsilon}(\psi)-\varphi_{m'}}c(-v_{\epsilon}(\psi))
\\&\leq
\int_{D}v''_{\epsilon}(\psi)| F|^2e^{-u(-v_{\epsilon}(\psi))-\varphi_{m'}}.
\end{split}
\end{equation}

\

\emph{Step 4: Nonsmooth cut-off function}

\

Note that
$\sup_{\epsilon}\sup_{D}e^{-u(-v_{\epsilon}(\psi))-\varphi_{m'}}<+\infty,$
and
$$v''_{\epsilon}(\psi)| F|^2e^{-u(-v_{\epsilon}(\psi))-\varphi_{m'}}\leq
\frac{2}{B}\mathbb{I}_{\{-t_{0}-B<\psi<-t_{0}\}}| F|^2\sup_{\epsilon}\sup_{D}e^{-u(-v_{\epsilon}(\psi))-\varphi_{m'}},$$
then it follows from inequality \eqref{eq:14} and the dominated convergence theorem that
\begin{equation}
\label{eq:32}
\begin{split}
&\lim_{\epsilon\to0}\int_{D}v''_{\epsilon}(\psi)| F|^2e^{-u(-v_{\epsilon}(\psi))-\varphi_{m'}}
\\=&\int_{D}\frac{1}{B}\mathbb{I}_{\{-t_{0}-B<\psi<-t_{0}\}}|F|^2e^{-u(-v_{t_0,B}(\psi))-\varphi_{m'}}
\\\leq&\left(\sup_{D}e^{-u(-v_{t_0,B}(\psi))}\right)\int_{D}\frac{1}{B}\mathbb{I}_{\{-t_{0}-B<\psi<-t_{0}\}}|F|^2e^{-\varphi_{m'}}<+\infty.
\end{split}
\end{equation}

Note that $\inf_{\epsilon}\inf_{D}e^{v_{\epsilon}(\psi)-\varphi_{m'}}c(-v_{\epsilon}(\psi))>0$,
then it follows from inequality \eqref{eq:31} and \eqref{eq:32} that
$\sup_{\epsilon}\int_{D}|F_{m',\epsilon}-(1-v'_{\epsilon}(\psi)){F}|^{2}<+\infty.$
Combining with
\begin{equation}
\label{eq:33}
\sup_{\epsilon}\int_{D}|(1-v'_{\epsilon}(\psi)){F}|^{2}\leq\int_{D}\mathbb{I}_{\{\psi<-t_{0}\}}|F^{2}|<+\infty,
\end{equation}
one can obtain that $\sup_{\epsilon}\int_{D}|F_{m',\epsilon}|^{2}<+\infty$,
which implies that
there exists a subsequence of $\{F_{m',\epsilon}\}_{\epsilon>0}$ (also denoted by $\{F_{m',\epsilon}\}_{\epsilon>0}$)
compactly convergent to a holomorphic function $F_{m'}$ on $D$.

Note that $\sup_{\epsilon}\sup_{D}e^{v_{\epsilon}(\psi)-\varphi_{m'}}c(-v_{\epsilon}(\psi))<+\infty$ and
$|F_{m',\epsilon}-(1-v'_{\epsilon}(\psi)){F}|^{2}\leq 2(|F_{m',\epsilon}|^{2}+|\mathbb{I}_{\{\psi<-t_{0}\}}F|^{2})$,
then it follows from inequality \eqref{eq:33} and the dominated convergence theorem on any given $K\Subset D$ that
\begin{equation}
\label{eq:34}
\begin{split}
&\lim_{\epsilon\to0}\int_{K}|F_{m',\epsilon}-(1-v'_{\epsilon}(\psi)){F}|^{2}e^{v_{\epsilon}(\psi)-\varphi_{m'}}c(-v_{\epsilon}(\psi))
\\=&\int_{K}|F_{m'}-(1-b_{t_0,B}(\psi)){F}|^{2}e^{v_{t_0,B}(\psi)-\varphi_{m'}}c(-v_{t_0,B}(\psi)).
\end{split}
\end{equation}
Combining with inequality \eqref{eq:31}, inequality \eqref{eq:32} and equality \eqref{eq:34}, one can obtain that
\begin{equation}
\label{eq:35}
\begin{split}
&\int_{K}|F_{m'}-(1-b_{t_0,B}(\psi)){F}|^{2}e^{v_{t_0,B}(\psi)-\varphi_{m'}}c(-v_{t_0,B}(\psi))
\\\leq&\left(\sup_{D}e^{-u(-v_{t_0,B}(\psi))}\right)\int_{D}\frac{1}{B}\mathbb{I}_{\{-t_{0}-B<\psi<-t_{0}\}}|F|^2e^{-\varphi_{m'}}
\end{split}
\end{equation}
which implies
\begin{equation}
\label{eq:36}
\begin{split}
&\int_{D}|F_{m'}-(1-b_{t_0,B}(\psi)){F}|^{2}e^{v_{t_0,B}(\psi)-\varphi_{m'}}c(-v_{t_0,B}(\psi))
\\\leq&\left(\sup_{D}e^{-u(-v_{t_0,B}(\psi))}\right)\int_{D}\frac{1}{B}\mathbb{I}_{\{-t_{0}-B<\psi<-t_{0}\}}|F|^2e^{-\varphi_{m'}}.
\end{split}
\end{equation}

\

\emph{Step 5: Singular weight}

\

Note that
\begin{equation}
\label{eq:37}
\int_{D}\frac{1}{B}\mathbb{I}_{\{-t_{0}-B<\psi<-t_{0}\}}|F|^2e^{-\varphi_{m'}}\leq\int_{D}\frac{1}{B}\mathbb{I}_{\{-t_{0}-B<\psi<-t_{0}\}}|F|^{2}e^{-\varphi}<+\infty,
\end{equation}
and $\sup_{D}e^{-u(-v_{t_0,B}(\psi))}<+\infty$,
then it follows from \eqref{eq:36} that
$$\sup_{m'}\int_{D}|F_{m'}-(1-b_{t_0,B}(\psi)){F}|^{2}e^{v_{t_0,B}(\psi)-\varphi_{m'}}c(-v_{t_0,B}(\psi))<+\infty.$$
Combining with $\inf_{m'}\inf_{D}e^{v_{t_0,B}(\psi)-\varphi_{m'}}c(-v_{t_0,B}(\psi))>0$,
one can obtain that
$$\sup_{m'}\int_{D}|F_{m'}-(1-b_{t_0,B}(\psi)){F}|^{2}<+\infty.$$
Note that
\begin{equation}
\label{eq:38}
\int_{D}|(1-b_{t_0,B}(\psi)){F}|^{2}\leq\int_{D}|\mathbb{I}_{\{\psi<-t_{0}\}}F|^{2} <+\infty.
\end{equation}
Then $\sup_{m'}\int_{D}|F_{m'}|^{2}<+\infty$,
which implies that there exists a compactly convergent subsequence of $\{F_{m'}\}$ (also denoted by $\{F_{m'}\}$),
which is convergent a holomorphic function $\tilde{F}$ on $D$.

Note that $\sup_{D}e^{v_{t_0,B}(\psi)-\varphi_{m'}}c(-v_{t_0,B}(\psi))<+\infty$,
then it follows from inequality \eqref{eq:38} and the
dominated convergence theorem on any given compact subset $K$ of $D$ that
\begin{equation}
\label{eq:39}
\begin{split}
&\lim_{m''\to+\infty}\int_{K}|F_{m''}-(1-b_{t_0,B}(\psi)){F}|^{2}e^{v_{t_0,B}(\psi)-\varphi_{m'}}c(-v_{t_0,B}(\psi))
\\=&\int_{K}|\tilde{F}-(1-b_{t_0,B}(\psi)){F}|^{2}e^{v_{t_0,B}(\psi)-\varphi_{m'}}c(-v_{t_0,B}(\psi)).
\end{split}
\end{equation}
Note that for any $m''\geq m'$, $\varphi_{m'}\geq\varphi_{m''}$ holds,
then it follows from inequality \eqref{eq:36} and inequality \eqref{eq:37}
that
\begin{equation}
\label{eq:40}
\begin{split}
&\lim_{m''\to+\infty}\int_{K}|F_{m''}-(1-b_{t_0,B}(\psi)){F}|^{2}e^{v_{t_0,B}(\psi)-\varphi_{m'}}c(-v_{t_0,B}(\psi))
\\\leq&
\limsup_{m''\to+\infty}\int_{K}|F_{m''}-(1-b_{t_0,B}(\psi)){F}|^{2}e^{v_{t_0,B}(\psi)-\varphi_{m''}}c(-v_{t_0,B}(\psi))
\\\leq&
\limsup_{m''\to+\infty}\left(\sup_{D}e^{-u(-v_{t_0,B}(\psi))}\right)\int_{D}\frac{1}{B}\mathbb{I}_{\{-t_{0}-B<\psi<-t_{0}\}}|F|^2e^{-\varphi_{m''}}
\\\leq&
\left(\sup_{D}e^{-u(-v_{t_0,B}(\psi))}\right)C<+\infty.
\end{split}
\end{equation}
Combining with equality \eqref{eq:39},
one can obtain that
$$\int_{K}|\tilde{F}-(1-b_{t_0,B}(\psi)){F}|^{2}e^{v_{t_0,B}(\psi)-\varphi_{m'}}c(-v_{t_0,B}(\psi))\leq\left(\sup_{D}e^{-u(-v_{t_0,B}(\psi))}\right)C,$$
for any compact subset $K$ of $D$,
which implies
$$\int_{D}|\tilde{F}-(1-b_{t_0,B}(\psi)){F}|^{2}e^{v_{t_0,B}(\psi)-\varphi_{m'}}c(-v_{t_0,B}(\psi))\leq\left(\sup_{D}e^{-u(-v_{t_0,B}(\psi))}\right)C.$$
When $m'\to+\infty$,
it follows from the monotone convergence theorem that
\begin{equation}
\label{eq:41}
\begin{split}
\int_{D}|\tilde{F}-(1-b_{t_0,B}(\psi)){F}|^{2}e^{v_{t_0,B}(\psi)-\varphi}c(-v_{t_0,B}(\psi))\leq\left(\sup_{D}e^{-u(-v_{t_0,B}(\psi))}\right)C.
\end{split}
\end{equation}

\

\emph{Step 6: ODE system}

\

It suffices to find $\eta$ and $\phi$ such that
$\eta+g^{-1}=e^{-v_{\epsilon}(\psi_{m})}e^{-\phi}\frac{1}{c(-v_{\epsilon}(\psi_{m}))}$  on $D$ and $s'-u's=1$.
As $\eta=s(-v_{\epsilon}(\psi_{m}))$ and $\phi=u(-v_{\epsilon}(\psi_{m}))$,
we have $(\eta+g^{-1}) e^{v_{\epsilon}(\psi_{m})}e^{\phi}=\left(\left(s+\frac{s'^{2}}{u''s-s''}\right)e^{-t}e^{u}\right)\circ(-v_{\epsilon}(\psi_{m}))$.

Summarizing the above discussion about $s$ and $u$, we are naturally led to a
system of ODEs (see \cite{guan-zhou12,guan-zhou13p,guan-zhou13ap,GZopen-effect}):
\begin{equation}
\label{eq:42}
\begin{split}
&1).\,\,\left(s+\frac{s'^{2}}{u''s-s''}\right)e^{u-t}=\frac{1}{c(t)}, \\
&2).\,\,s'-su'=1,
\end{split}
\end{equation}
where $t\in(0,+\infty)$.

It is not hard to solve the ODE system \eqref{eq:42} and get $u(t)=-\log\left(\int_{0}^{t}c(t_{1})e^{-t_{1}}dt_{1}\right)$ and
$s(t)=\frac{\int_{0}^{t}\left(\int_{0}^{t_{2}}c(t_{1})e^{-t_{1}}dt_{1}\right)dt_{2}}{\int_{0}^{t}c(t_{1})e^{-t_{1}}dt_{1}}$
(see \cite{guan-zhou13ap}).
It follows that $s\in C^{\infty}((0,+\infty))$ satisfies $s>0$ and $s'>0$, $\lim_{t\to+\infty}u(t)=-\log\left(\int_{0}^{+\infty}c(t_{1})e^{-t_{1}}dt_{1}\right)$ and
$u\in C^{\infty}((0,+\infty))$ satisfies $u''s-s''>0$.

As $u(t)=-\log\left(\int_{0}^{t}c(t_{1})e^{-t_{1}}dt_{1}\right)$ is decreasing with respect to $t$,
then it follows from $0\geq v(t)\geq\max\{t,-t_{0}-B_{0}\}\geq -t_{0}-B_{0}$ for any $t\leq0$
that
\begin{equation}
\begin{split}
\sup_{D}e^{-u(-v_{t_0,B}(\psi))}
\leq\sup_{t\in(0,t_{0}+B]}e^{-u(t)}
=\int_{0}^{t_{0}+B}c(t_{1})e^{-t_{1}}dt_{1},
\end{split}
\end{equation}
therefore we are done.
Thus we have proved Lemma \ref{l: estimates}.

\vspace{.1in} {\em Acknowledgements}.
The first named author was supported by NSFC-11825101, NSFC-11522101 and NSFC-11431013.

\bibliographystyle{references}
\bibliography{xbib}

\end{document}